\newtheorem{theorem}{Theorem}[section]
\newtheorem{corollary}[theorem]{Corollary}
\newtheorem{proposition}[theorem]{Proposition}
\theoremstyle{definition}
\newtheorem{definition}[theorem]{Definition}
\newtheorem{example}[theorem]{Example}
\newtheorem{remark}[theorem]{Remark}
\def\R{\mathbb{R}}
\def\n{\textbf{n}}
\def\v{\textbf{v}}
\def\a{\textbf{a}}
\begin{document}

\title{The Plateau-Rayleigh instability of translating $\lambda$-solitons}
\author{Antonio Bueno, Rafael L\'opez, Irene Ortiz}
\address{ Departamento de Ciencias\\  Centro Universitario de la Defensa de San Javier. 30729 Santiago de la Ribera, Spain}
\email{antonio.bueno@cud.upct.es}
\address{ Departamento de Geometr\'{\i}a y Topolog\'{\i}a\\  Universidad de Granada. 18071 Granada, Spain}
\email{rcamino@ugr.es}

\address{ Departamento de Ciencias\\  Centro Universitario de la Defensa de San Javier. 30729 Santiago de la Ribera, Spain}
\email{irene.ortiz@cud.upct.es}
 \keywords{translating soliton, stability, minimizer,Plateau-Rayleigh criterion } 
  \subjclass{53E10,35C08,76E17}

\begin{abstract} 
Given a unit vector $\v\in\R^3$ and $\lambda\in\R$, a translating $\lambda$-soliton is a surface in $\mathbb{R}^3$ whose mean curvature $H$ satisfies $H=\langle N,\v\rangle+\lambda,\ |\v|=1$, where $N$ is the Gauss map of the surface. In this paper, we extend the phenomenon of instability of Plateau-Rayleigh for translating $\lambda$-solitons of cylindrical type, proving that long pieces of these surfaces are unstable. We will provide explicit bounds on the length of these surfaces. It will be also proved that if a translating $\lambda$-soliton  is a graph, then it is a minimizer of the weighted area in a suitable class of surfaces with the same boundary and the same weighted volume.  
\end{abstract}

\maketitle
\section{Introduction  and formulation of the problem}
In 1873, Plateau observed that a stream of water dropping vertically was divided into smaller packets with the same volume but less surface area \cite{pl}. Plateau found experimentally that this happened when the length of the stream was greater than about $3'13$ times its diameter. Later Rayleigh proved theoretically that a stream of radius $r$ breaks into drops if its length is greater than $2\pi r$ \cite{ra}.This behavior is known as the Plateau-Rayleigh instability and it is part of a greater branch of fluid dynamics concerned with fluid thread breakup \cite[Ch. 5]{de}.

This physical phenomenon can be explained  within the theory of surfaces with constant mean curvature (cmc surfaces in short). Constant mean curvature surfaces are critical points of   the area functional for any volume preserving variation of the surface.   Let     $C_r$ be a circular cylinder of radius $r>0$, whose mean curvature is $H=1/r$ and consider a piece of $C_r$ of length $L$. The theory of stability of cmc surfaces asserts that if $L>2\pi r$, then the cylinder $C_r$ is unstable \cite{mc}: see also Section \ref{sec5} below.   From the mathematical viewpoint, the Plateau-Rayleigh criterion has appeared in contexts others as cmc surfaces: \cite{ag,ba,bs,gr,lo0,lo3,mc0}.

The purpose of this paper is to extend the Plateau-Rayleigh phenomenon  to  translating $\lambda$-solitons. Translating $\lambda$-solitons  are critical points of a weighted area functional and a notion of stability arises in a natural way. We precise the necessary background. Let $\R^3$ be the Euclidean $3$-space with its usual metric $\langle\cdot,\cdot\rangle$. Let $dA$ and $dV$ be the usual area and volume element, respectively. Given $\phi\in C^\infty(\R^3)$, the triple $(\R^3,\langle\cdot,\cdot\rangle,e^\phi dV)$ is known as the weighted Euclidean space with density $\phi$, whose weighted area and volume elements are given by 
$$dA_\phi=e^\phi\,dA,\hspace{.5cm} dV_\phi=e^\phi\,dV.$$
As in the theory of cmc surfaces,  it is natural to study the critical points of the weighted area $A_\phi$.  Let $\Sigma$ be a surface  in $\R^3$ and consider $\{\Sigma_t:|t|<\epsilon\}$  a variation of $\Sigma$, $\Sigma_0=\Sigma$, whose variational field $\xi$ has compact support. Let $u=\langle \xi,N\rangle\in C_0^\infty(\Sigma)$ be the normal component of $\xi$, where $N$ stands for the Gauss map of $\Sigma$. We also assume that the boundary $\partial\Sigma_t$ of $\Sigma_t$ coincides with   $\partial\Sigma$ for any $t\in(-\epsilon,\epsilon)$.  Let $A_\phi(t)$ and $V_\phi(t)$ be the weighted area and volume functionals associated to the variation.   The formulas of the first variations of $A_\phi$ and $V_\phi$  are
\begin{equation}\label{firstvariations}
 A'_\phi(0)=-\int_\Sigma (H-\langle N,\overline{\nabla}\phi\rangle)u\,dA_\phi,\quad  V'_\phi(0)=-\int_\Sigma u\,dV_\phi,
\end{equation}
where $H$ is the mean curvature of $\Sigma$ defined as the sum of the principal curvatures. Here $\overline{\nabla}$ is the gradient in $(\R^3,\langle\cdot,\cdot\rangle)$. See \cite{bay} for a detailed derivation of these formulas. The variation is said to be volume preserving  if $V_\phi(t)$ is constant for any $t$, or equivalently, if
$$
\int_\Sigma u\,dA_\phi=0.
$$
From the expression of $A'(0)$, the  weighted mean curvature  of $\Sigma$ is defined as  
$$
H_\phi=H-\langle N,\overline{\nabla}\phi\rangle.
$$
It is immediate from  \eqref{firstvariations} that $A_\phi'(0)=0$ for any volume preserving variation if and only if  $H_\phi=\lambda$ is constant on $\Sigma$, $\lambda\in\R$. If we drop the condition that the variations preserve the volume, then $\Sigma$ is a critical point of $A_\phi$ if and only if  $H_\phi=0$.
 
In this paper, we focus on the particular density $\phi(x)=\langle x,\v\rangle$, $x\in\R^3$, where $\v\in\R^3,|\v|=1$. Then, $\overline{\nabla}\phi=\v$ and we give the following definition for the critical points of $A_\phi$.

\begin{definition} 
Let $\v\in\R^3$, $|\v|=1$, called the density vector. 
An orientable surface $\Sigma$ in $\R^3$ is called a translating $\lambda$-soliton  with respect to $\v$ if   $H_\phi=\lambda$ is constant on $\Sigma$, or equivalently, if 
\begin{equation}\label{eq2}
H=\langle N,\v\rangle+\lambda.
\end{equation}
\end{definition}

If $\lambda=0$ we simply refer to these surfaces as translating solitons. This particular case $\lambda=0$ is of special interest because, in a different context, translating solitons appear  as eternal solutions of the mean curvature flow in the sense that they evolve by pure translations during the flow. From this viewpoint, these surfaces have been widely studied because they are models of the singularities of type II in the theory of the mean curvature flow \cite{hui1,hui2}.

The general case $\lambda\not=0$ has received less attention, despite this class of surfaces is somehow equivalent to cmc surfaces, at least from the variational viewpoint (as well as translating solitons are to minimal surfaces). For example, in \cite{lo1}, the second author classified the translating $\lambda$-solitons invariant by a one-parameter group of translations and a one-parameter group of rotations.  It was  proved the existence of rotational translating $\lambda$-solitons that intersect orthogonally the rotation axis (see \cite{bo} for similar results for hypersurfaces). In \cite{lo2} it was studied compact  translating $\lambda$-solitons with boundary. Some results of rigidity have been also obtained  assuming that the second fundamental form has constant square norm \cite{li} or that the surface is of Weingarten type \cite{fu}. 

A translating $\lambda$-soliton  $\Sigma$ is said to be {\it stable} (resp. {\it strongly stable}) if $A_\phi''(0)\geq 0$ for any volume preserving variation  (resp. variation) of $\Sigma$.  The formula of the second variation of $A_\phi$ is 
 \begin{equation}\label{a2}
A''_\phi(0)=-\int_\Sigma u(\Delta u+\langle\nabla u,\v\rangle+|A|^2 u)\,dA_\phi,
\end{equation}
where $u=\langle\xi,N\rangle\in C_0^\infty(\Sigma)$. Here $\Delta$ and $\nabla$ are the Laplace-Beltrami and gradient operators, both computed in $\Sigma$, and $A$ is the second fundamental form of $\Sigma$.

 Although the study of stability of translating solitons  ($\lambda=0$) has been considered in the literature (\cite{css,gu,im,ku,mm,sh,xi}),   there are no results on the stability of   translating $\lambda$-solitons for $\lambda\neq0$. In this paper we investigate the Plateau-Rayleigh instability criterion for translating $\lambda$-solitons.  
 
 Let $\Sigma$ be a surface in $\R^3$ of  cylindrical type. Then, $\Sigma$ is parametrized by 
\begin{equation}\label{para}
\Psi(s,t)=\alpha(s)+t\, \a,\quad \a\in\R^3,\ |\a|=1,
\end{equation}
where $\alpha:I\rightarrow\R^3$ is an arc-length parametrized curve contained in a vectorial plane $\Pi$ orthogonal to the  ruling direction  $\a$. The curve $\alpha$ is called the base curve of $\Sigma$. Let us fix an interval $[a,b]$ in the domain of  $\alpha$. Since $\Sigma$ is invariant along the $\a$-direction, we take a piece of $\Sigma$ of length $L>0$ in the $t$-parameter, that is, $\Sigma(a,b;L)=\Psi([a,b]\times[0,L])$. Suppose now that $\Sigma$ is a translating $\lambda$-soliton. Motivated by the instability phenomenon of circular cylinders, it is natural  to expect that for small values of $L$, the surface $\Sigma(a,b;L)$ is stable but if $L$ is sufficiently large, then the surface is unstable. In this paper, we shall answer the following question: 

\begin{quote}
\emph{Given  $[a,b]\subset I$, does there exist $L_0>0$ such that the compact piece $\Sigma(a,b;L)$ is unstable for any $L>L_0$?}
\end{quote}

Comparing with the cmc case, the main difficulty is that the expression $A''_\phi(0)$ in \eqref{a2} is more complicated because there appears a first order term, $\langle\nabla u,\v\rangle$, and also because  of the weighted area element $dA_\phi$. However, a point that will help our work is that there are explicit parametrizations of the base curves of  translating $\lambda$-solitons of cylindrical type.

The stability operator of translating $\lambda$-solitons will be studied in Section  \ref{sec3}.  As a first result, we prove that graphical translating $\lambda$-solitons are strongly stable (Theorem \ref{t1}), extending the case $\lambda=0$ proved in \cite{sh}. It will be also proved that graphical translating $\lambda$-solitons are minimizers of the weighted area in a suitable class of   surfaces with the same boundary and weighted volume (Theorem \ref{t2}).   In Section \ref{sec4}  we will address the Plateau-Rayleigh instability criterion initially posed by distinguishing between the cases $\lambda>1$, $\lambda=1$ and $\lambda<1$. This distinction raises in a natural way by explicit integration of equation \eqref{eq2} for a cylindrical translating $\lambda$-soliton. Finally  in Section \ref{sec5} we will study the stability of a circular cylinder which is both a translating $\lambda$-soliton and a cmc surface. We will compare the length of cylinders in the Plateau-Rayleigh instability criterion for both notions of stability.

\section{ Translating $\lambda$-solitons of cylindrical type}\label{sec2}

In this section, we present the class of translating $\lambda$-solitons of cylindrical type.   Let $(x_1,x_2,x_3)$ stand for the canonical coordinates of $\R^3$. Let $\Sigma$ be a  cylindrical surface parametrized as in \eqref{para}. If $\Sigma$ is a translating $\lambda$-soliton, equation \eqref{eq2} can be expressed in terms of the base curve $\alpha$. The principal curvatures   of $\Sigma$ are $\kappa_\alpha$ and $0$, where $\kappa_\alpha$ is the curvature of $\alpha$. The Gauss map $N$ of $\Sigma$ is the unit normal vector $\n_\alpha$ of $\alpha$, that is, $N(\Psi(s,t))=\n_\alpha(s)$. Then, the translating $\lambda$-soliton equation  \eqref{eq2} for $\Sigma$ can be expressed as 
\begin{equation}\label{1di}
\kappa_\alpha(s)=\langle\n_\alpha(s),\v\rangle+\lambda.
\end{equation}

 Two examples are planes and circular cylinders.
 
\begin{example}\label{r1} 
Given a density vector $\v$, any plane is a translating $\lambda$-soliton for $\lambda=-\langle N,\v\rangle$.
\end{example}

\begin{example}\label{r2} A circular cylinder of radius $r>0$ and ruling direction $\a$ is a translating $\lambda$-soliton with density vector $\v=\a$ and $|\lambda|=1/r$. Notice that in \eqref{eq2}, $\langle N,\v\rangle=0$ and $|H|=1/r$. As a consequence, a circular cylinder is a cmc surface and also it is a translating $\lambda$-soliton. In fact, if a cmc surface $\Sigma$ is a translating $\lambda$-soliton, then  $\langle N,\v\rangle$ is a constant function. Then, $\Sigma$ is a constant angle surface and the only constant angle surfaces with non-zero constant mean curvature are circular cylinders \cite{mn}.
\end{example}

 Solutions of \eqref{1di} are explicitly known  \cite{hn,lo1}. Let us point out that  there is no a priori relation between the density vector $\v$ and the ruling direction $\a$ of $\Sigma$.  This observation also occurs for translating solitons where if, for example, $\v=(0,0,1)$, there are examples of cylindrical type  whose ruling direction $\a$ goes from orthogonal to $\v$ (grim reaper), when $\a$ is not horizontal (tilted grim reapers) and when $\a$ is vertical (plane). 
 
In this paper we will assume that the rulings are orthogonal to the density vector $\v$, since otherwise the statements of them are a bit cumbersome. After a change of Euclidean coordinates and a rotation of $\R^3$, we can assume that $\v=e_3=(0,0,1)$ and $\a=e_2=(0,1,0)$, where $\{e_1,e_2,e_3\}$ is the canonical basis of $\R^3$. Also, after a change of the orientation of $\Sigma$, we can assume $\lambda\geq 0$. Thus, the parametrization \eqref{para} of $\Sigma$ is now
\begin{equation}\label{para2}
\Psi(s,t)=(\alpha_1(s),t,\alpha_3(s)),
\end{equation}
and the translating $\lambda$-soliton  equation \eqref{1di}  writes as   
\begin{equation}\label{eq1}
\kappa_\alpha(s)=  \alpha_1'(s)+\lambda.
\end{equation}
The Gauss map of $\Sigma$ is    $N=\n_\alpha=(-\alpha_3',0,\alpha_1')$.  The solutions of \eqref{eq1} can be obtained explicitly. Since $\alpha$ is parametrized by arc-length, there is a function $\theta(s)$ such that   $\alpha'(s)=(\cos\theta(s),0,\sin\theta(s))$. Then, \eqref{eq1} writes as 
$\theta'(s)= \cos\theta(s)+\lambda$, hence 
$$\int\frac{d\theta}{\cos\theta+\lambda}=s+c,\quad c\in\R.$$
This integral can be solved by quadratures obtaining $\theta(s)$ and, consequently, the curve $\alpha(s)$.  In the following result, we explicit these solutions: see \cite[Th. 2.6]{lo1}.

\begin{proposition} \label{pr1}
The base curve of a translating $\lambda$-soliton parametrized by \eqref{para2}   has the following parametrization: 
\begin{enumerate}
\item Case $\lambda>1$. Then,
\begin{equation}\label{fp1}
\alpha(s)=\left(- \lambda s+2\arctan\left(\sqrt{\tfrac{\lambda+1}{\lambda-1}}\tan (\tfrac{s}{2}\sqrt{\lambda^2-1})\right), 0,
 \log (\lambda- \cos (s\sqrt{\lambda^2-1} ))\right).
 \end{equation}
The domain of $\alpha$ is $\R$. The curve $\alpha$ is  not embedded neither closed. Moreover, $\alpha$ is periodic along the $x_1$-axis with period $T=2\pi/\sqrt{\lambda^2-1}$ in the parameter domain.
\item Case $\lambda=1$.  Then,
\begin{equation}\label{fp2}
\alpha(s)=\left(-s+2\arctan(s),0, \log(1+s^2)\right).
\end{equation}
The domain of $\alpha$ is $\R$. The curve $\alpha$ is symmetric about the $x_3$-axis and it  has a unique self-intersection point. The   normal vector $\emph{\n}_\alpha$ converges to $ (0,0,-1)$ as $|s|\to\infty$. 
\item Case $\lambda<1$. There are two sub-types of curves. A type corresponds with curves that are graphs on the $x_1$-axis. A second type of curves are   parametrized by 
\begin{equation}\label{fp3}
\alpha(s)=\left(- \lambda s+2\arctan\left(\sqrt{\tfrac{1+\lambda}{1-\lambda}}\tanh(\tfrac{s}{2}\sqrt{1-\lambda^2})\right),0, \log (-\lambda+  \cosh (s\sqrt{1-\lambda^2} )\right).
\end{equation}
The domain of $\alpha$ is $\R$. The curve $\alpha$ is symmetric about the $x_3$-axis  and it  has a unique self-intersection point.  The unit normal $\emph{\n}_\alpha$ converges to $ (\pm\sqrt{1-\lambda^2},0,-\lambda)$.
\end{enumerate}
\end{proposition}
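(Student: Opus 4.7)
My plan is to reduce the classification to a scalar ODE for the turning angle, integrate it case by case, and then reconstruct the base curve.

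Since $\alpha$ is parametrized by arc length in the $x_1x_3$-plane, there is a smooth angle function $\theta(s)$ with $\alpha'(s)=(\cos\theta(s),0,\sin\theta(s))$, so $\kappa_\alpha=\theta'$ and $\alpha_1'=\cos\theta$. Equation \eqref{eq1} then becomes the autonomous first-order ODE
\begin{equation*}
\theta'(s)=\cos\theta(s)+\lambda.
\end{equation*}
The key observation is that \emph{the entire classification is read off from the phase line of this ODE}: the zero set of $\cos\theta+\lambda$ is empty if $\lambda>1$, a single degenerate zero at $\theta=\pi$ if $\lambda=1$, and two zeros $\theta=\pm\arccos(-\lambda)$ if $0\le\lambda<1$. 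This yields, respectively, one monotone family, one heteroclinic, and (for $\lambda<1$) two families corresponding to the two open intervals between consecutive zeros.

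Next, I would integrate $ds=d\theta/(\cos\theta+\lambda)$ via the Weierstrass substitution $u=\tan(\theta/2)$, which turns the right-hand side into $2\,du/\bigl((\lambda-1)u^2+(\lambda+1)\bigr)$. The sign of the discriminant $1-\lambda^2$ selects the antiderivative: arctan for $\lambda>1$, the linear function $u$ itself for $\lambda=1$, and arctanh for $\lambda<1$. Inverting gives $\theta(s)$ explicitly, and the pre-factors $\sqrt{(\lambda+1)/(\lambda-1)}$ or $\sqrt{(1+\lambda)/(1-\lambda)}$ appearing in \eqref{fp1}, \eqref{fp3} come out immediately. Then I recover $\alpha_1$ from $\alpha_1'=\cos\theta=\theta'-\lambda$, obtaining $\alpha_1(s)=\theta(s)-\lambda s$ up to a constant; and $\alpha_3$ from the identity $\sin\theta=-\bigl(\log(\cos\theta+\lambda)\bigr)'/1$, which follows by differentiating the ODE: $-\sin\theta\cdot\theta'=(\cos\theta+\lambda)'$, hence $\alpha_3'=\sin\theta=-(\log(\cos\theta+\lambda))'$ and $\alpha_3(s)=-\log(\cos\theta(s)+\lambda)$ up to a constant. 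A direct computation of $\cos\theta(s)+\lambda$ from the formula for $\theta(s)$ (in case $\lambda>1$ one gets $(\lambda^2-1)/(\lambda-\cos(s\sqrt{\lambda^2-1}))$, and analogously in the other cases) reproduces the logarithmic terms in \eqref{fp1}--\eqref{fp3} up to an additive constant which can be absorbed.

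The qualitative assertions then follow from the phase-line picture. For $\lambda>1$, $\theta'\ge\lambda-1>0$, so $\theta$ is strictly monotone on $\R$ with $\cos\theta(s)$ periodic of period $T=2\pi/\sqrt{\lambda^2-1}$; this periodicity, together with the linear drift $-\lambda s$ in $\alpha_1$, gives the claimed non-embedded, non-closed curve periodic along $x_1$. For $\lambda=1$, $\theta(s)\to\pm\pi$ as $s\to\pm\infty$, and the symmetry $(\theta,s)\mapsto(-\theta,-s)$ of the ODE forces symmetry of $\alpha$ about the $x_3$-axis; a single self-intersection follows from the monotone behaviour combined with this symmetry. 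For $\lambda<1$, I would distinguish the two sub-types by which open interval between consecutive zeros of $\cos\theta+\lambda$ contains $\theta$: on the interval $(-\arccos(-\lambda),\arccos(-\lambda))$ the trajectory passes through $\theta=\pm\pi/2$, so $\cos\theta$ changes sign and the resulting $\alpha$ is not a graph (this is \eqref{fp3}, again with the $x_3$-axis symmetry and a unique self-intersection); on the neighbouring interval $(\arccos(-\lambda),2\pi-\arccos(-\lambda))$, $\cos\theta\le-\lambda<0$ throughout, so $\alpha_1$ is strictly monotone on $\R$ and $\alpha$ is a graph over the $x_1$-axis. The normal limits stated in the proposition follow from $\n_\alpha=(-\sin\theta,0,\cos\theta)$ and the corresponding limits of $\theta$.

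I expect the main obstacle to be the careful bookkeeping in case $(3)$: identifying which sub-family the Weierstrass antiderivative captures, verifying that the formula \eqref{fp3} really parametrizes the self-intersecting branch and not the graphical one, and pinning down the additive constants so that the stated symmetry about the $x_3$-axis is exact rather than only up to translation. The other potential subtlety is justifying the global extension of $\theta(s)$ to $\R$ in the asymptotic cases ($\lambda\le1$), which requires checking that the trajectory does not reach an equilibrium in finite $s$; this is standard from the linearization of $\cos\theta+\lambda$ at the equilibria and gives logarithmic blow-up of $s$, hence $\theta$ is defined on all of $\R$ with the asymptotic behaviour used above.
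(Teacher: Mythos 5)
Your proposal is correct and takes essentially the same route as the paper: the paper reduces \eqref{eq1} to the angle ODE $\theta'=\cos\theta+\lambda$, integrates $\int d\theta/(\cos\theta+\lambda)=s+c$ by quadratures, and defers the explicit solutions to \cite[Th. 2.6]{lo1}, which is precisely the computation you carry out via the Weierstrass substitution together with the identities $\alpha_1=\theta-\lambda s$ and $\alpha_3=-\log(\cos\theta+\lambda)$. Your phase-line analysis organizing the cases $\lambda>1$, $\lambda=1$, $\lambda<1$ (and the two sub-families for $\lambda<1$) coincides with the paper's case distinction, and your qualitative claims are sketched at least as carefully as in the paper itself.
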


\section{Stability on translating $\lambda$-solitons}\label{sec3}

Let $\Sigma$ be a translating $\lambda$-soliton.  From the expression \eqref{a2} for $A_\phi''(0)$, we define a second order operator $L_\phi$ acting on $C_0^\infty(\Sigma)$ by
\begin{equation}\label{ll}
L_\phi [u]=\Delta u+\langle\nabla u,\v\rangle+|A|^2 u.
\end{equation}
In consequence, a quadratic form $Q_\phi$  can be defined by
\begin{equation}\label{sl}
Q_\phi(u)=-\int_\Sigma u  L_\phi[u]\, dA_\phi.
\end{equation}
Notice that $L_\phi$ is not self-adjoint with respect to the standard $L^2$ inner product, but it is with respect to the weighted inner product $\int_\Sigma uv\, dA_\phi$. The following result follows immediately by taking the vector field $u\nabla u$ and applying in \eqref{sl} the divergence theorem for manifolds with density.

\begin{proposition}
A translating $\lambda$-soliton $\Sigma$  is strongly stable if and only if for any   $u\in C_0^\infty(\Sigma)$, we have
\begin{equation}\label{defQ}
Q_\phi(u)=-\int_\Sigma uL_\phi [u]\,dA_\phi=\int_\Sigma(|\nabla u|^2-|A|^2u^2)\,dA_\phi\geq0.
\end{equation}
If, in addition, \eqref{defQ} holds for any $u\in C_0^\infty(\Sigma)$ with $\int_\Sigma u\, dA_\phi=0$, then $\Sigma$ is  stable. 
\end{proposition}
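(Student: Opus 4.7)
The plan is to unpack the claimed identity
$$-\int_\Sigma u\,L_\phi[u]\,dA_\phi=\int_\Sigma(|\nabla u|^2-|A|^2u^2)\,dA_\phi,$$
and then derive the (strong) stability statement directly from the second variation formula \eqref{a2}. Since the $|A|^2u^2$ term already matches on both sides, everything reduces to showing
$$-\int_\Sigma u\bigl(\Delta u+\langle\nabla u,\v\rangle\bigr)\,dA_\phi=\int_\Sigma|\nabla u|^2\,dA_\phi,$$
which is essentially a weighted integration by parts. This is where the density $\phi(x)=\langle x,\v\rangle$ plays its role: its tangential gradient along $\Sigma$ is the tangential component of $\v$, so applying $\nabla u$ (which lies in $T\Sigma$) against it produces exactly $\langle\nabla u,\v\rangle$.

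More concretely, first I would follow the hint and consider the vector field $X=u\nabla u$ on $\Sigma$. The weighted divergence on a manifold with density is $\operatorname{div}_\phi X=\operatorname{div}X+\langle X,\nabla^\Sigma\phi\rangle$. Compute $\operatorname{div}(u\nabla u)=|\nabla u|^2+u\Delta u$, and observe that $\nabla^\Sigma\phi=\v-\langle\v,N\rangle N$, so $\langle\nabla u,\nabla^\Sigma\phi\rangle=\langle\nabla u,\v\rangle$ because $\nabla u$ is tangent. Hence
$$\operatorname{div}_\phi(u\nabla u)=|\nabla u|^2+u\bigl(\Delta u+\langle\nabla u,\v\rangle\bigr).$$
Because $u$ has compact support in the interior of $\Sigma$, the weighted divergence theorem yields $\int_\Sigma\operatorname{div}_\phi(u\nabla u)\,dA_\phi=0$, which is precisely the desired identity. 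Multiplying through by $-1$ and adding $-\int_\Sigma|A|^2u^2\,dA_\phi$ on both sides gives the full formula for $Q_\phi(u)$.

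Once the identity is in hand, the two equivalences follow at once from the definitions. By \eqref{a2}, $A''_\phi(0)=Q_\phi(u)$, so strong stability (namely $A''_\phi(0)\geq0$ for every compactly supported variation) is equivalent to $Q_\phi(u)\geq 0$ for all $u\in C_0^\infty(\Sigma)$. For plain stability, the normal components of admissible variational fields are exactly those $u\in C_0^\infty(\Sigma)$ satisfying the linearised volume-preserving condition $\int_\Sigma u\,dA_\phi=0$ deduced from \eqref{firstvariations}, and the conclusion is immediate.

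I do not anticipate any real obstacle: the entire argument is a one-line weighted integration by parts, and the only point to be careful about is the justification that the gradient of the ambient density restricts tangentially to a field whose pairing with $\nabla u$ reproduces $\langle\nabla u,\v\rangle$, so that the first-order term in $L_\phi$ is exactly what comes out of the divergence theorem on $(\Sigma,dA_\phi)$. The compact support of $u$ removes any boundary contribution, so no further analytic input is required.
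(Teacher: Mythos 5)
Your argument is correct and coincides with the paper's own proof, which consists precisely of applying the divergence theorem for manifolds with density to the vector field $u\nabla u$; you have simply written out the details (the computation of the weighted divergence, the identification $\langle\nabla u,\nabla^\Sigma\phi\rangle=\langle\nabla u,\v\rangle$, and the vanishing of boundary terms by compact support) that the paper leaves implicit. The passage from the integral identity to the stability statements via \eqref{a2} is also the intended one, so there is nothing to correct.
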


\begin{example} Planes are  strongly stable because $A=0$, hence \eqref{defQ} is non-negative.
\end{example}

The first result of stability of translating $\lambda$-solitons concerns the strong stability of graphs. For $\lambda=0$, the following result was proved in  \cite{sh} but the same arguments hold for arbitrary $\lambda$.

  \begin{theorem}\label{t1}
If $\Sigma$ is a graphical translating $\lambda$-soliton,  then $\Sigma$ is  strongly stable.
\end{theorem}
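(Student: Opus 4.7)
The plan is to exhibit a strictly positive function $f\in C^\infty(\Sigma)$ in the kernel of the Jacobi operator $L_\phi$ and then apply the Fischer--Colbrie type substitution $u=fg$ to reduce the quadratic form $Q_\phi$ to a manifestly non-negative expression. This is the weighted analogue of Shahriyari's argument for the case $\lambda=0$, the only new ingredient being the proper treatment of the drift term $\langle\nabla u,\v\rangle$.

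The key preliminary step is to verify that for every constant vector $\textbf{w}\in\R^3$, the angle function $f_{\textbf{w}}:=\langle N,\textbf{w}\rangle$ satisfies $L_\phi[f_{\textbf{w}}]=0$ on $\Sigma$. Geometrically this is translation invariance: the translated surface $\Sigma+t\textbf{w}$ has the same Gauss map $N$ and the same mean curvature $H$ as $\Sigma$, hence the same weighted mean curvature $H_\phi=H-\langle N,\v\rangle$, so the normal speed $u=\langle N,\textbf{w}\rangle$ of this variation lies in the kernel of the linearization of $H_\phi=\lambda$, which is precisely $L_\phi$. Analytically, one combines the classical Simons-type identity $\Delta\langle N,\textbf{w}\rangle+|A|^2\langle N,\textbf{w}\rangle=-\langle\nabla H,\textbf{w}^T\rangle$ with equation \eqref{eq2}, which yields $\nabla H=-A(\v^T)$ and produces exactly the first-order term $\langle\nabla f_{\textbf{w}},\v\rangle$.

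Since $\Sigma$ is a graph, there exists a unit vector $\textbf{w}_0$ normal to the reference plane such that $\langle N,\textbf{w}_0\rangle$ does not vanish on $\Sigma$; choosing the orientation so that $f:=\langle N,\textbf{w}_0\rangle>0$, we obtain a smooth strictly positive Jacobi function. Given any $u\in C_0^\infty(\Sigma)$, I write $g=u/f\in C_0^\infty(\Sigma)$ and compute
$$
L_\phi[fg]=f\,\Delta_\phi g+2\langle\nabla f,\nabla g\rangle+g\,L_\phi[f]=f\,\Delta_\phi g+2\langle\nabla f,\nabla g\rangle,
$$
where $\Delta_\phi=\Delta+\langle\nabla\cdot,\v\rangle$ is the drift Laplacian, which is self-adjoint with respect to $dA_\phi$. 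Integration by parts then gives
$$
Q_\phi(u)=-\int_\Sigma u\,L_\phi[u]\,dA_\phi=\int_\Sigma f^2|\nabla g|^2\,dA_\phi\geq 0,
$$
proving strong stability.

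The main technical obstacle I anticipate lies in the verification that $L_\phi[\langle N,\textbf{w}\rangle]=0$: in the unweighted case this is immediate from $\Delta N+|A|^2 N=0$, but here one must track carefully how the Simons term, the tangential gradient of $H=\langle N,\v\rangle+\lambda$, and the tangent projection $\v^T$ combine to reconstruct precisely the drift term of $L_\phi$ rather than an extraneous curvature contribution. Once this identity is in hand, the remainder of the argument is the classical Hardy-type manipulation adapted to the measure $dA_\phi$.
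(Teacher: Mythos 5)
Your proposal is correct and follows essentially the same route as the paper: the heart of both arguments is the observation that, by the identity $\Delta N+|A|^2N=-\nabla H$ together with \eqref{eq2} and the symmetry of the second fundamental form, the angle function $\langle N,\textbf{w}\rangle$ of a graphical soliton is a positive solution of $L_\phi[\cdot]=0$, after which a Fischer--Colbrie--Schoen type argument yields $Q_\phi\geq 0$. The only (cosmetic) difference is in the last step: you use the ground-state substitution $u=fg$ and weighted integration by parts to get the exact identity $Q_\phi(u)=\int_\Sigma f^2|\nabla g|^2\,dA_\phi$, whereas the paper sets $h=\log v$ and concludes via the Young-type inequality $2u\langle\nabla h,\nabla u\rangle\leq|\nabla u|^2+u^2|\nabla h|^2$; these are two standard, equivalent implementations of the same argument.
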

 
\begin{proof} It is well known that the Gauss map  $N$ of a surface $\Sigma$ of $\R^3$ satisfies 
\begin{equation}\label{nn}
\Delta N+|A|^2 N=-\nabla H.
\end{equation}
If $\Sigma$ is a translating $\lambda$-soliton, we have $\nabla H=\langle\nabla N,\v\rangle$ by  \eqref{eq2}. By hypothesis, $\Sigma$ is a graph over some plane $\Pi$. Let $\textbf{w}$  be a unit vector orthogonal to the plane $\Pi$ and define the function  $v=\langle N, \textbf{w}\rangle$, which does not vanish on $\Sigma$ because $\Sigma$ is a graph. From \eqref{nn}, the function $v$  satisfies 
\begin{equation*}
\begin{split}
\Delta v+|A|^2 v&=-\langle \nabla \langle N,\v\rangle,\textbf{w}\rangle= -A(\v^\top,\textbf{w}^\top)=-\langle  \nabla \langle N,\textbf{w}\rangle,\v\rangle\\
&=-\langle \nabla v,\v\rangle.
\end{split}
\end{equation*}
Therefore, $v$ is a non-vanishing function on $\Sigma$ such that $L_\phi [v]=0$. We now follow standard techniques using the log of the function $ v$, according to arguments that appear in \cite{fc}. We give briefly the details. Let   $u\in C_0^\infty(\Sigma)$. The function  $h=\log v$ satisfies
$$
-\Delta h=\langle\nabla h,\textbf{w}\rangle+|A|^2+|\nabla h|^2.
$$
Multiplying by $u^2$ and integrating in the weighted space, 
\begin{equation}\label{ab}
\int_\Sigma u^2\langle\nabla h,\textbf{w}\rangle\,  dA_\phi+\int_\Sigma u^2|A|^2\, dA_\phi+\int_\Sigma u^2|\nabla h|^2\, dA_\phi=-\int_\Sigma u^2\Delta h\, dA_\phi.
\end{equation}
Applying the divergence theorem for weighted manifolds to the function $u^2 \nabla h$, and using \eqref{ab}, we have
$$
0=\int_\Sigma 2u \langle\nabla h,\nabla u\rangle dA_\phi-\int_\Sigma u^2(|\nabla h|^2+|A|^2)\, dA_\phi.
$$
Since $2u \langle\nabla h,\nabla u\rangle\leq|\nabla u|^2+u^2|\nabla h|^2$, the above identity implies 
$$\int_\Sigma u^2(|\nabla h|^2+|A|^2)\, dA_\phi\leq \int_\Sigma\left(|\nabla u|^2+u^2|\nabla h|^2\right)\, dA_\phi,$$
which leads directly to $
Q_\phi(u)\geq0$. The arbitrariness of $u\in C_0^\infty(\Sigma)$ allows   to conclude the result.\end{proof}

 The statement of Theorem \ref{t1} can be improved by exhibiting that graphical translating $\lambda$-soliton are not only strongly stable, but actually minimizers within the class of all surfaces with the same boundary and the same weighted volume.  The argument in the following result uses techniques of  calibrations in the context of manifolds with density.

\begin{theorem}\label{t2}  Let $\Sigma$ be a compact translating $\lambda$-soliton with density $\emph{\v}$ which is a graph on a domain $U$ of a plane $\Pi=\emph{\v}^\top$.   Then, $\Sigma$ is a minimizer of the weighted area $A_\phi$ in the class of all surfaces included in the cylinder  $U\times\R\emph{\v}$ with the same boundary and  homology class and the same weighted volume $V_\phi$    than $\Sigma$.
\end{theorem}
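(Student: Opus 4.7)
The plan is to build a calibration adapted to the weighted setting. Because $\Sigma$ is a graph over $U\subset\Pi=\v^{\perp}$, the translates $\Sigma_s=\Sigma+s\v$, $s\in\R$, are all graphs over $U$ and together foliate the cylinder $U\times\R\v$. Since $H$ and $\langle N,\v\rangle$ are invariant under Euclidean translations, each leaf $\Sigma_s$ is again a translating $\lambda$-soliton with the same density vector $\v$ and the same constant $\lambda$. I would then define a smooth unit vector field $X$ on $U\times\R\v$ by declaring $X(q)$ to be the unit normal to the unique leaf of this foliation passing through $q$, with orientation inherited from $\Sigma$.

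The next step is to compute the weighted divergence of $X$. Extending the leafwise unit normal produces a smooth vector field whose Euclidean divergence satisfies the classical identity $\overline{\mathrm{div}}(X)=-H$ along each leaf. Combining this with the definition of weighted divergence yields
\begin{equation*}
\mathrm{div}_\phi(X)=\overline{\mathrm{div}}(X)+\langle X,\overline{\nabla}\phi\rangle=-H+\langle N,\v\rangle=-H_\phi=-\lambda,
\end{equation*}
so $X$ is a smooth unit field with constant weighted divergence $-\lambda$ on the whole cylinder. This is exactly the calibration data we need.

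Now let $\Sigma'\subset U\times\R\v$ be a competitor with $\partial\Sigma'=\partial\Sigma$, the same homology class, and the same weighted volume as $\Sigma$. The homology assumption provides a region $\Omega$ (possibly with signed multiplicity) cobounded by $\Sigma$ and $\Sigma'$, and the equality of weighted volumes translates into the vanishing of the signed weighted volume $V_\phi(\Omega)=0$. Applying the divergence theorem for manifolds with density to $X$ on $\Omega$, together with $\langle X,N\rangle=1$ on $\Sigma$ and the pointwise bound $\langle X,N'\rangle\leq |X||N'|=1$ on $\Sigma'$, one obtains
\begin{equation*}
0=-\lambda\, V_\phi(\Omega)=\int_{\Sigma'}\langle X,N'\rangle\, dA_\phi-\int_{\Sigma}\langle X,N\rangle\, dA_\phi\leq A_\phi(\Sigma')-A_\phi(\Sigma),
\end{equation*}
which is the minimality claim.

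The principal obstacle I anticipate is purely bookkeeping: one must formulate precisely what "same homology class and same weighted volume" buys, so that the cobounded region $\Omega$ is well-defined (with its signed multiplicities if $\Sigma'$ is not itself a graph) and the signed volume in the divergence theorem genuinely vanishes. Once that is set up, orientations tracked, and the leafwise normal seen to extend smoothly to $X$ on the whole cylinder (which uses that $\Sigma$ is a graph, hence transverse to $\v$ everywhere), the calibration argument runs automatically and even yields the rigidity statement: equality forces $N'=X$ on $\Sigma'$, i.e., $\Sigma'$ is itself a leaf of the foliation and hence a translate of $\Sigma$.
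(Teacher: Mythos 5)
Your proposal is correct and takes essentially the same calibration approach as the paper: the paper's vector field $X=e^{x_3}N$ on $U\times\R$ is exactly the weighted version of your unit normal field to the foliation by vertical translates, and your identity $\mathrm{div}_\phi(X)=-H+\langle N,\v\rangle=-\lambda$ is the same computation as the paper's $\mathrm{Div}_{\R^3}(e^{x_3}N)=-\lambda e^{x_3}$. The remaining step—the weighted divergence theorem on the $3$-chain cobounded by $\Sigma$ and the competitor, the vanishing of the signed weighted volume, and the bound $\langle X,\widetilde{N}\rangle\leq |X|$—matches the paper's proof step for step.
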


\begin{proof} 
After a change of coordinates, we can suppose $\v=(0,0,1)$ and that $\Pi$ is the plane of equation $x_3=0$.  
Suppose that $\Sigma$ is a graph of a function $u\in C^\infty(U)$.  The expression of the  Gauss map $N$ of $\Sigma$ is    given by
$$N(x_1,x_2,u(x_1,x_2))=\frac{1}{\sqrt{1+| Du|^2}}\left(-Du,1\right).$$
On the domain $U\times\R\subset\R^{3}$, define the vector field 
$$X(x_1,x_2,x_3)=e^{x_3}N(x_1,x_2,u(x_1,x_2))=\frac{e^{x_3}}{\sqrt{1+| Du|^2}}\left(-Du,1\right).$$
Let us observe that at each point $(x_1,x_2,x_3)\in U$, the vector  field $X(x_1,x_2,x_3)$ is  the vertical translation of the Gauss map   at $(x_1,x_2,u(x_1,x_2))\in \Sigma$ multiplied by $e^{x_3}$. Using \eqref{eq2}, we have
\begin{equation}\label{14}
\begin{split}
\mbox{Div}_{\R^{3}}(X)&=e^{x_3}\mbox{Div}_{\R^2}\left(\frac{-Du}{\sqrt{1+| Du|^2}}\right)+e^{x_3}\langle N,\v\rangle\\
&=e^{x_3}\left(-H+\langle N,\v\rangle\right)  =-\lambda e^{x_3}.
\end{split}
\end{equation}
  
Let $\widetilde{\Sigma}$ be a surface included in the cylinder $U\times\R$ with the same boundary and the same homology class than $\Sigma$.   We also assume that $\widetilde{\Sigma}$ encloses the same weighted volume $V_\phi$ than $\Sigma$. Since $\Sigma$ and $\widetilde{\Sigma}$ have the same homology class, $\Sigma\cup\widetilde{\Sigma}$ define a $3$-chain $\Omega$ in $\R^3$, which is included in $U\times\R$. Let $\tilde{N}$ be  the compatible orientation of $\tilde{\Sigma}$.  Using \eqref{14}, the divergence theorem yields
\begin{eqnarray*}
-\int_{\Omega} \lambda e^{x_3} \, dV &=&\int_{\Omega}\mbox{Div}_{\R^{3}}(X)\, dV=\int_\Sigma\langle X,N\rangle\, d\Sigma+\int_{\widetilde{\Sigma}}\langle X,\widetilde{N}\rangle\, d\widetilde{A}\nonumber\\
&=&\int_\Sigma e^{x_3}\, d\Sigma+\int_{\widetilde{\Sigma}}e^{x_3}  \langle N,\widetilde{N}\rangle\, d\widetilde{A}\geq  \int_\Sigma e^{x_3}\, d\Sigma-\int_{\widetilde{\Sigma}}e^{x_3} d\widetilde{A}\nonumber\\
& =&A_\phi(\Sigma)-A_\phi(\widetilde{\Sigma}),
\end{eqnarray*}
because $\langle N,\widetilde{N}\rangle\leq 1$.  On the other hand, the surfaces $\Sigma$ and $\widetilde{\Sigma}$ enclose the same weighted volume. Thus,
$$\int_\Omega\lambda e^{x_3}\, dV=\lambda\int_\Omega e^{x_3}\, dV=\lambda\int_\Omega dV_\phi=0.$$
This finishes the proof. 
\end{proof}
 
 In the particular case $\lambda=0$, the assumption on the constancy of the weighted volume can be dropped.  
  
\begin{corollary}   Let $\Sigma$ be a compact translating  soliton with density $\emph{\v}$. If $\Sigma$ is a graph on a  plane $\Pi=\emph{\v}^\top$, then  $\Sigma$ is a minimizer of the weighted area $A_\phi$ in the class of of all surfaces included in the cylinder  $U\times\R\emph{\v}$   with the same boundary  and the same homology class than $\Sigma$.
\end{corollary}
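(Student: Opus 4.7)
The plan is to specialize the calibration argument of Theorem \ref{t2} to the case $\lambda=0$, observing that the only place that proof uses the weighted-volume hypothesis is to kill the bulk integral $\int_\Omega \lambda e^{x_3}\,dV$. When $\lambda=0$, that integral vanishes identically, so the hypothesis becomes superfluous.

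More concretely, I would first reduce to the normalization $\v=(0,0,1)$ and $\Pi=\{x_3=0\}$, and write $\Sigma$ as the graph of some $u\in C^\infty(U)$. I then introduce the same calibration
\[
X(x_1,x_2,x_3)=\frac{e^{x_3}}{\sqrt{1+|Du|^2}}\bigl(-Du,1\bigr)
\]
on the solid cylinder $U\times\R$. Repeating the computation \eqref{14} with $\lambda=0$ gives
\[
\mathrm{Div}_{\R^3}(X)=e^{x_3}\bigl(-H+\langle N,\v\rangle\bigr)=0,
\]
since the translating soliton equation \eqref{eq2} with $\lambda=0$ reads $H=\langle N,\v\rangle$.

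Next, let $\widetilde{\Sigma}\subset U\times\R\v$ be any competitor sharing boundary and homology class with $\Sigma$, and let $\Omega$ be the $3$-chain with $\partial\Omega=\Sigma\cup\widetilde{\Sigma}$ (with compatible orientation $\widetilde{N}$). Applying the divergence theorem to $X$ on $\Omega$ yields
\[
0=\int_\Omega \mathrm{Div}_{\R^3}(X)\,dV = \int_\Sigma\langle X,N\rangle\,dA+\int_{\widetilde{\Sigma}}\langle X,\widetilde{N}\rangle\,d\widetilde{A}.
\]
Since $\langle X,N\rangle=e^{x_3}$ on $\Sigma$ and $\langle X,\widetilde{N}\rangle=e^{x_3}\langle N,\widetilde{N}\rangle\le e^{x_3}$ on $\widetilde{\Sigma}$, rearranging gives
\[
A_\phi(\Sigma)=\int_\Sigma e^{x_3}\,dA \le \int_{\widetilde{\Sigma}} e^{x_3}\,d\widetilde{A}=A_\phi(\widetilde{\Sigma}),
\]
which is the claim.

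There is no real obstacle here beyond adapting the bookkeeping of Theorem \ref{t2}; the content of the corollary is precisely that the $\lambda$-term, which was the only reason a weighted-volume constraint entered the argument, disappears when $\lambda=0$. The only mild point to be careful about is making sure $X$ extends to a smooth vector field on the full ambient cylinder $U\times\R$ (so that the divergence theorem applies on $\Omega$), which is immediate from the graphical hypothesis: the normal field $N$ of $\Sigma$ depends only on $(x_1,x_2)\in U$, so the vertical translation yielding $X$ is well defined and smooth on $U\times\R$.
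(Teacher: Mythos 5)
Your proposal is correct and is exactly the argument the paper intends: the paper gives no separate proof of this corollary, precisely because it follows from the calibration computation of Theorem \ref{t2} once one notes that $\mathrm{Div}_{\R^3}(X)=-\lambda e^{x_3}$ vanishes identically for $\lambda=0$, so the bulk term that forced the weighted-volume constraint never appears. The only cosmetic caveat (shared with the paper's own write-up of Theorem \ref{t2}) is the orientation bookkeeping for $\widetilde{N}$ in the divergence theorem, which is harmless since $|\langle N,\widetilde{N}\rangle|\leq 1$ gives the needed bound in either sign convention.
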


\section{The Plateau-Rayleigh instability criterion} \label{sec4}

In this section, we address the Rayleigh-Plateau instability for translating $\lambda$-solitons.  Let $\Sigma$ be a cylindrical translating $\lambda$-soliton parametrized by  \eqref{para2}. Recall that the density vector is $\v=e_3$.  The desired instability result will be achieved once we determine a compact piece $\Sigma^*\subset\Sigma$ and  a function $u\in C_0^\infty(\Sigma^*)$ such that
\begin{equation}\label{b1}
Q_\phi(u)<0, \quad u=0\ \mathrm{ at }\ \partial \Sigma^*,\quad \int_{\Sigma^*} u\, dA_\phi=0.
\end{equation}
Notice that the weighted area element is 
$$dA_\phi=e^{\langle\Psi,e_3\rangle}\, d\Sigma=e^{\alpha_3(s)}\, d\Sigma=e^{\alpha_3(s)}\, ds\, dt.$$
By Proposition \ref{pr1}, there are no closed solutions of \eqref{1di}. We define next the compact piece $\Sigma^*$. Let $[a,b]$ be a subinterval of the domain of the base curve $\alpha$.   By the symmetry of $\Sigma$ in the direction of $\a$, we can suppose that the domain of $\Psi(s,t)$ in \eqref{para2} is $[a,b]\times [0,L]$ and let 
$\Sigma^*=\Psi([a,b]\times [0,L])$. The boundary of $\Sigma^*$ is given by
$$
\partial \Sigma^*=\left(\alpha([a,b])\times\{0,L\}\right)\cup \left(\{\alpha(a),\alpha(b)\}\times [0,L]\right).$$ 
A first attempt for the test function $u\in C_0^\infty(\Sigma^*)$  is to consider $u(s,t)=f(s)g(t)$ by separated variables.   Thus, the boundary condition $u=0$ in \eqref{b1} is equivalent to
$$
f(a)=f(b)=0,\quad g(0)=g(L)=0.
$$

 The condition for $u(s,t)$ to have zero weighted mean in \eqref{b1} reads as
\begin{equation}\label{bb2}
0= \int_a^b  f(s)e^{ \alpha_3(s)}\, ds \int_ 0^Lg(t)\, dt.
\end{equation}
This  condition is fulfilled if 
\begin{equation}\label{m-integral}
\int_a^b  f(s)e^{\alpha_3(s)}\, ds=0,\quad\mbox{or}\quad \int_ 0^Lg(t)\, dt=0.
\end{equation}
We now work on the terms that appear in \eqref{ll}. Since the first fundamental form of $\Sigma$ is $ds\, dt$, we have  
$$\nabla u=\langle\nabla u,\alpha'\rangle\alpha'+\langle\nabla u,e_2\rangle e_2=f'g\alpha'+fg'e_2.$$
Here, we understand that the derivative of each function is with respect to its variable. Thus,  $\langle\nabla u,\v\rangle =f'g \alpha_3'$. For the Laplacian of $u$, we have $\Delta u=f''g+fg''$. On the other hand, since a cylindrical surface is flat, the Gauss curvature $K$ of $\Sigma$ is zero. Therefore,  $ |A|^2 =H^2-2K=(\alpha_1'+\lambda)^2 $. Plugging these identities together in \eqref{ll}, the expression \eqref{sl} for $Q_\phi(u)$ is  
\begin{equation}\label{particular0}
Q_\phi(u)=-\int_a^b\int_0^Lfg\left(f''g+fg''+f'g\alpha_3'+H^2fg\right)e^{\alpha_3(s)}ds\, dt.
\end{equation}
The zero weighted mean condition \eqref{m-integral} will be obtained by means of the function $g$. For this, take 
\begin{equation}\label{gg}
g(t)=\sin\left(\frac{2\pi t}{L}\right),
\end{equation}
which vanishes at $t=0$ and $t=L$ and $g$ satisfies $\int_0^L g(t)\, dt=0$. Since $g''=-\frac{4\pi^2}{L^2}g$, by substituting into  \eqref{particular0} and integrating by parts, we have
\begin{equation}\label{particular}
\begin{split}
Q_\phi(u)&=-\int_0^Lg(t)^2\, dt \int_{a}^{b}f\left(f''+f'\alpha_3'+\left(H^2-\frac{4\pi^2}{L^2}\right)f\right)e^{\alpha_3}ds\\
&= \frac{L}{2}\int_{a}^{b}\left(f'^2-\left(H^2-\frac{4\pi^2}{L^2}\right)f^2\right)e^{\alpha_3}ds\\
&= \frac{L}{2}\int_{a}^{b}\left(f'^2-\left((\alpha_1'+\lambda)^2-\frac{4\pi^2}{L^2}\right)f^2\right)e^{\alpha_3}ds.
\end{split}
\end{equation}

At this point, we analyze the Plateau-Rayleigh instability criterion depending on the value of $\lambda$ and taking advantage of the explicit expression of the function $\alpha_1$. Recall that $\lambda\geq 0$. The case $\lambda=0$ corresponds to $\alpha$ being the grim reaper, hence it will be discarded since the surface is a graph on $\v^\top$ and thus it is strongly stable by Theorem \ref{t1}.

\subsection{Case $\lambda>1$}

  The explicit parametrization of the base curve $\alpha$ is given in \eqref{fp1}. By the periodicity of $\alpha$, we find 
$$\alpha(s+T)=\alpha(s)+(\alpha_1(T),0)=\alpha(s)-(\lambda s,0).$$
A fundamental piece of $\alpha$ is any subset of the type $\alpha([\bar{s},\bar{s}+T])$, for any $\bar{s}\in\R$. See Figure \ref{fig1}, left.

We will study the instability of $\Sigma$ only for fundamental pieces. Let  
$$s_0=\frac{T}{2}=\frac{\pi}{\sqrt{\lambda^2-1}}.$$
Since $\alpha$ is also symmetric with respect to the $x_3$-axis, then any fundamental piece of $\alpha$ is given by $\alpha([-s_0+\sigma,s_0+\sigma])$ for some $\sigma\in [0,s_0]$. Let 
\begin{equation*}
\Sigma(\sigma;L) = \{(\alpha_1(s),t,\alpha_3(s)):s\in[-s_0+\sigma,s_0+\sigma], t\in [0,L]\}.
\end{equation*}
Thus, each piece $\alpha([-s_0+\sigma,s_0+\sigma])$, with $\sigma\in [0,s_0]$ produces the entire curve $\alpha$ by repeating $\alpha([-s_0+\sigma,s_0+\sigma])$ along  the $x_1$-axis. Notice that the fundamental piece changes as $\sigma\in[0,s_0]$ varies. We assume that $\sigma$ is arbitrary but fixed and investigate the instability of the corresponding fundamental piece; see Figure \ref{fig1}.

 \begin{figure}[hbtp]
\centering
\includegraphics[width=.35\textwidth]{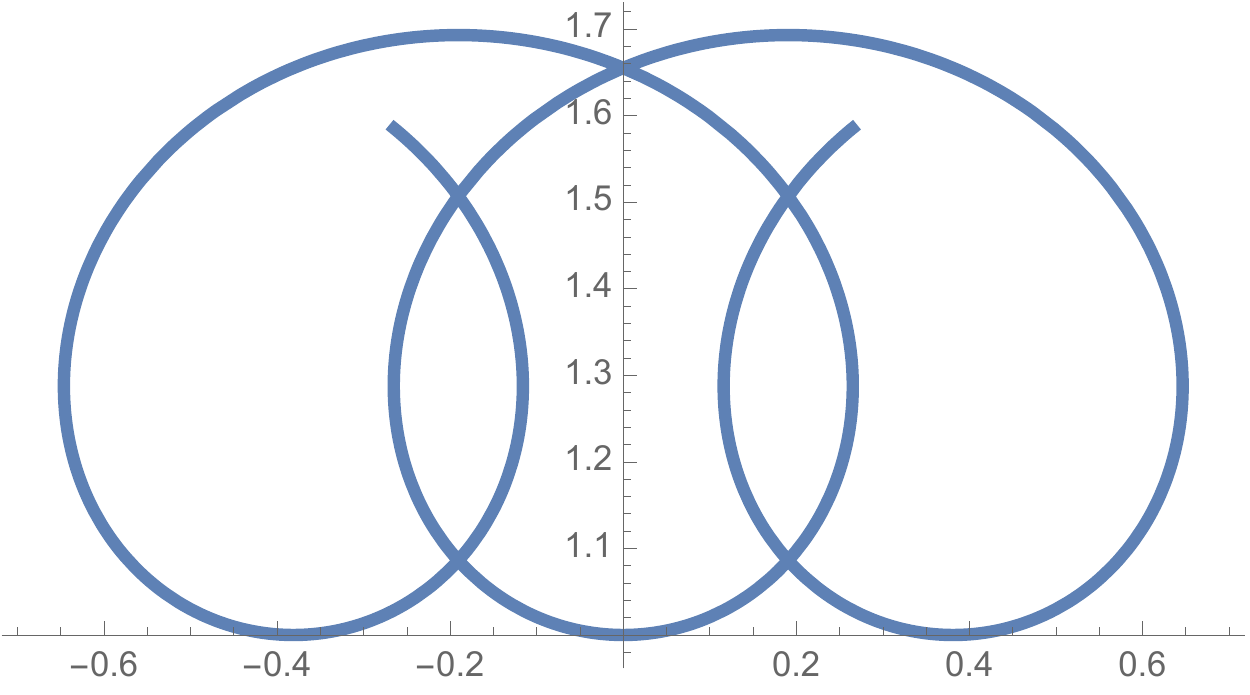}  \quad\includegraphics[width=.15\textwidth]{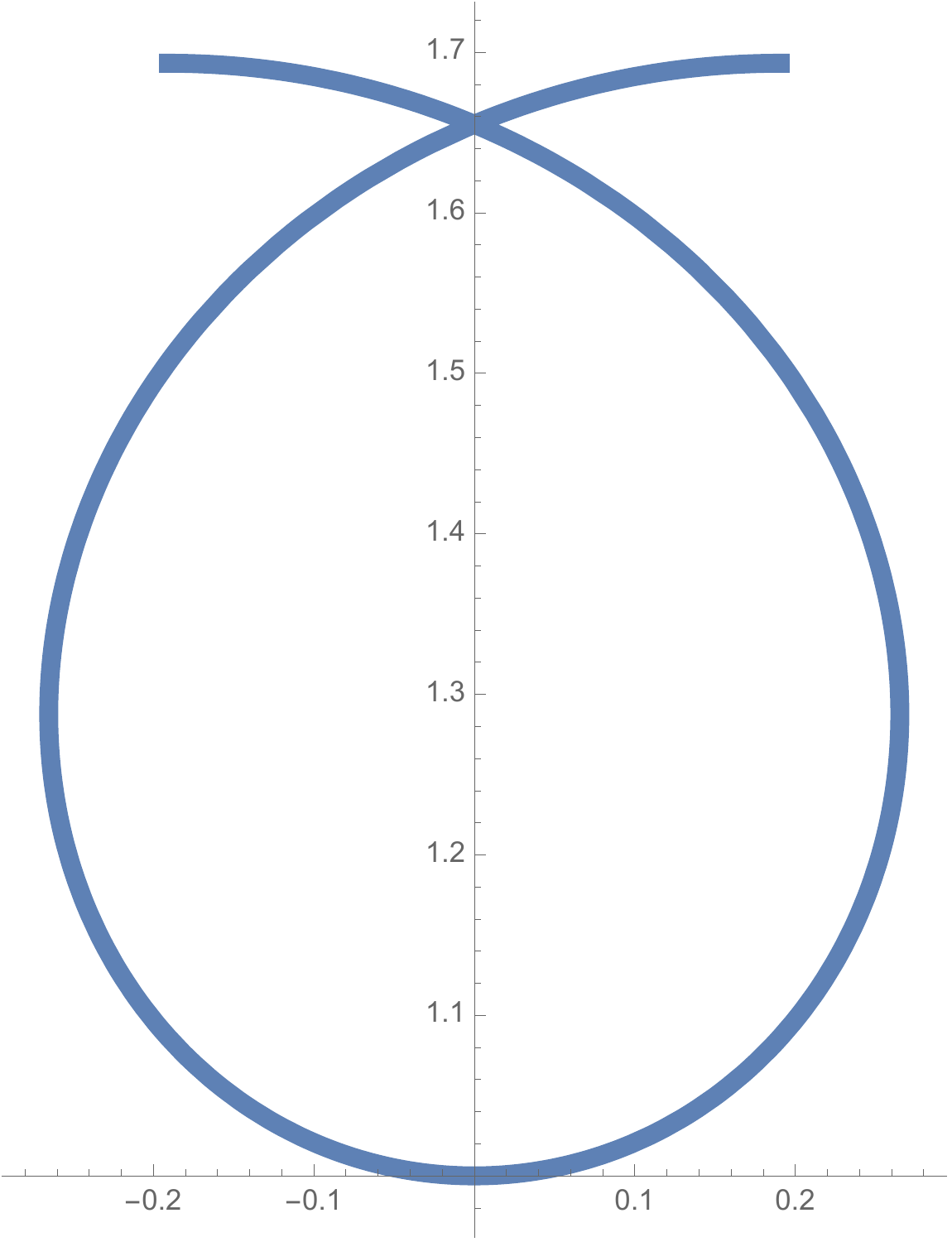}\quad\includegraphics[width=.25\textwidth]{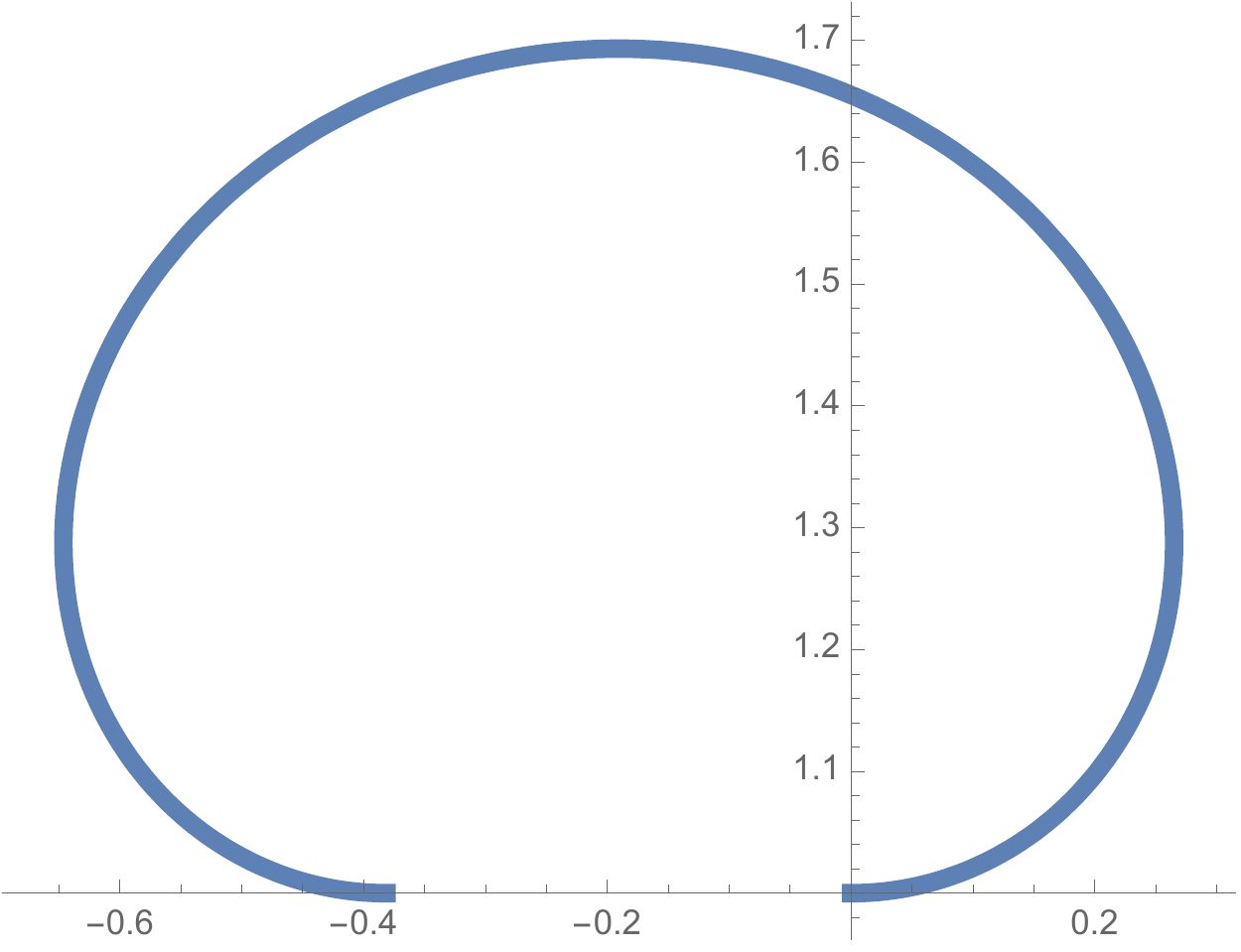} \caption{Case $\lambda>1$. Here $\lambda=3$. Different plots of $\alpha$: Full curve (left); $\alpha([-s_0,s_0])$ (middle); $\alpha([0,T])$ (right).} \label{fig1}
\end{figure}

For the function $f=f(s)$, we choose a function of type $\sin(s)$  which must vanish at $s=0$ and $s=\pi$. Notice that $f$ is defined in the interval $ [-s_0+\sigma,s_0+\sigma]$. Doing a suitable dilation and translation in the $s$ variable, we consider the   function
$$f(s)=\sin\left(\frac{\pi}{2s_0}s+\frac{\pi}{2}(1-\frac{ \sigma}{s_0})\right)e^{-\alpha_3(s)/2}.$$
The added term $e^{-\alpha_3/2}$ in the above expression of $f$ will help in the successive computations.  We split the quadratic form \eqref{particular} as
\begin{equation*}
Q_\phi(u)=\frac{L}{2}\int_{-s_0+\sigma}^{s_0+\sigma} f'^2e^{\alpha_3}ds-\frac{L}{2}\int_{-s_0+\sigma}^{s_0+\sigma}(\alpha_1'+\lambda)^2f^2e^{\alpha_3}dsds+\frac{2\pi^2}{L}\int_{-s_0+\sigma}^{s_0+\sigma}f^2e^{\alpha_3}ds.
\end{equation*}
After a straightforward computation,  
$$
\alpha_1'(s)+\lambda=\frac{\lambda^2-1}{\lambda-\cos(s\sqrt{\lambda^2-1})}.
$$
Substituting the function $f(s)$ into the quadratic form $Q_\phi(u)$ given in \eqref{particular}, explicit integration yields
\begin{eqnarray*}
\int_{-s_0+\sigma}^{s_0+\sigma} f'^2e^{\alpha_3}ds&=&\frac{\pi }{4}(\lambda+\cos(\sigma\sqrt{\lambda^2-1})),\\
\int_{-s_0+\sigma}^{s_0+\sigma}(\alpha_1'+\lambda)^2f^2e^{\alpha_3}ds&=&\pi (\lambda+\cos(\sigma\sqrt{\lambda^2-1})),\\
\int_{-s_0+\sigma}^{s_0+\sigma}f^2e^{\alpha_3}ds&=&\frac{\pi}{\sqrt{\lambda^2-1}}.
\end{eqnarray*}
Hence the value of $Q_\phi(u)$ is
\begin{equation}\label{q1}
Q_\phi(u)=\frac{\pi}{8L\sqrt{\lambda^2-1}}\left(16\pi^2-3L^2\sqrt{\lambda^2-1}(\lambda+\cos(\sigma\sqrt{\lambda^2-1}))\right).
\end{equation}
For values of $L$ close to $0$, $Q_\phi(u)$ is positive. However, viewed $Q_\phi(u)$ as a function on $L$, $Q_\phi(u)$ is monotically decreasing to $-\infty$. Equaling to 0 the above parenthesis, we have
\begin{equation}\label{l1}
L_0=\frac{4\pi}{\sqrt{3(\lambda+\cos(\sigma\sqrt{\lambda^2-1}))\sqrt{\lambda^2-1}}}.
\end{equation}
Therefore, we obtain the following instability result.

\begin{theorem}\label{thinestabilidad}
Suppose $\lambda>1$. Let be $L_0$ as in \eqref{l1} and $\sigma\in[0,s_0]$. If $L>L_0$, then the translating $\lambda$-soliton $\Sigma(\sigma;L)$ is unstable.
\end{theorem}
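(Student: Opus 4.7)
The plan is to construct an explicit test function $u \in C_0^\infty(\Sigma(\sigma;L))$ satisfying the three conditions collected in \eqref{b1} whenever $L > L_0$. Following the separated-variable ansatz $u(s,t) = f(s)g(t)$ already set up in the preceding discussion, I would take
$$g(t) = \sin\!\left(\frac{2\pi t}{L}\right),$$
which vanishes at $t = 0, L$ and has zero mean on $[0,L]$, so the weighted-volume constraint is automatic via \eqref{m-integral}. For the spatial factor I would use
$$f(s) = \sin\!\left(\frac{\pi}{2s_0}s + \frac{\pi}{2}\Big(1 - \frac{\sigma}{s_0}\Big)\right) e^{-\alpha_3(s)/2},$$
which vanishes exactly at the endpoints $s = -s_0 + \sigma$ and $s = s_0 + \sigma$ of the fundamental piece. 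The weight $e^{-\alpha_3(s)/2}$ is inserted so that, in the integrand, the conformal factor $e^{\alpha_3}$ coming from $dA_\phi$ is partially absorbed, which is essential for closed-form integration.

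With $g$ so chosen, the reduction that led to \eqref{particular} applies verbatim, so the task reduces to evaluating the three integrals
$$\int_{-s_0+\sigma}^{s_0+\sigma} f'^2\,e^{\alpha_3}\,ds, \qquad \int_{-s_0+\sigma}^{s_0+\sigma} (\alpha_1'+\lambda)^2 f^2\,e^{\alpha_3}\,ds, \qquad \int_{-s_0+\sigma}^{s_0+\sigma} f^2\,e^{\alpha_3}\,ds.$$
From \eqref{fp1} one gets $\alpha_3(s) = \log(\lambda - \cos(s\sqrt{\lambda^2-1}))$ and, by direct computation, $\alpha_1'(s) + \lambda = (\lambda^2-1)/(\lambda - \cos(s\sqrt{\lambda^2-1}))$. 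Substituting these identities, the exponential weights collapse and each integrand becomes a trigonometric polynomial in $s\sqrt{\lambda^2-1}$, which integrates in closed form over a full half-period. The three values reported in the text then follow.

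Assembling the pieces yields the expression \eqref{q1} for $Q_\phi(u)$. Viewed as a function of $L$, the bracket in \eqref{q1} is strictly decreasing (linear in $L^2$ with negative slope and a positive constant $16\pi^2$), so it has a unique positive root, and setting it equal to zero produces precisely $L_0$ in \eqref{l1}. For any $L > L_0$ the bracket is negative, hence $Q_\phi(u) < 0$, while $u$ has compact support, vanishes on $\partial \Sigma(\sigma;L)$, and has zero weighted mean by the choice of $g$. All three conditions in \eqref{b1} are thus met and $\Sigma(\sigma;L)$ is unstable.

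The main obstacle is the explicit evaluation of the three quadratures, and more specifically guessing the correct exponential correction in $f$. Without the factor $e^{-\alpha_3/2}$ the integrands carry $\lambda - \cos(s\sqrt{\lambda^2-1})$ to non-integer powers and the antiderivatives cease to be elementary; the symmetry of the chosen $f$ about $s=\sigma$ and the cancellation between the $f'^2$ term (which produces an $\alpha_3'^2$ contribution upon differentiation) and the weight $e^{\alpha_3}$ are what make the quadratic form tractable and give the clean dependence on $\sigma$ through $\cos(\sigma\sqrt{\lambda^2-1})$.
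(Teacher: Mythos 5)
Your proposal is correct and follows essentially the same route as the paper: the identical separated test function with $g(t)=\sin(2\pi t/L)$ and $f(s)=\sin\bigl(\tfrac{\pi}{2s_0}s+\tfrac{\pi}{2}(1-\tfrac{\sigma}{s_0})\bigr)e^{-\alpha_3(s)/2}$, the same three closed-form integrals, and the same monotonicity-in-$L$ argument yielding \eqref{q1} and the threshold \eqref{l1}. The only quibble is cosmetic: after inserting $e^{-\alpha_3/2}$ the integrands are rational trigonometric functions rather than trigonometric polynomials, but they still integrate in elementary terms exactly as you claim.
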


\begin{remark}
Let us view $L_0$ in \eqref{l1} as a function of $\lambda$. If $\lambda\rightarrow\infty$, then $L_0\rightarrow0$. On the contrary, if $\lambda\rightarrow1$, then $L_0\rightarrow\infty$. This is in accordance with the classical result for circular cylinders because for cylinders, the value of $L_0$ is $2\pi r$ and the constant mean curvature is $\lambda=1/r$.
\end{remark}

The values $L_0$ obtained in \eqref{q1} are strictly increasing for $\sigma\in[0,s_0]$, being the largest
\begin{equation}\label{L0uniforme}
L_0^{*}=\frac{4\pi}{\sqrt{3(\lambda-1)\sqrt{\lambda^2-1}}},
\end{equation}
for $\sigma=s_0$. As a matter of fact, we can give a uniform length, not depending on $\sigma$, for which every fundamental piece is unstable.

\begin{corollary}
Suppose $\lambda>1$. Let be $L_0^{*}$ as in \eqref{L0uniforme}. If $L>L_0^{*}$, then the translating $\lambda$-soliton $\Sigma(\sigma;L)$ is unstable, for every $\sigma\in[0,s_0]$.
\end{corollary}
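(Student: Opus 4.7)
The plan is to observe that the corollary is essentially a uniform version of Theorem \ref{thinestabilidad}, obtained by maximizing the threshold $L_0$ in \eqref{l1} over the parameter $\sigma \in [0,s_0]$. So the whole argument reduces to a simple monotonicity analysis of $L_0(\sigma)$ followed by invocation of Theorem \ref{thinestabilidad}.

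First, I would regard the function
$$\sigma \longmapsto L_0(\sigma) = \frac{4\pi}{\sqrt{3(\lambda+\cos(\sigma\sqrt{\lambda^2-1}))\sqrt{\lambda^2-1}}}$$
on the interval $[0,s_0]$, where $s_0 = \pi/\sqrt{\lambda^2-1}$. Since $\sigma\sqrt{\lambda^2-1}$ ranges over $[0,\pi]$ as $\sigma$ ranges over $[0,s_0]$, the function $\sigma \mapsto \cos(\sigma\sqrt{\lambda^2-1})$ is strictly decreasing from $1$ to $-1$. Consequently, the denominator of $L_0(\sigma)$ is strictly decreasing, and $L_0(\sigma)$ is strictly increasing on $[0,s_0]$. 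The maximum value is attained at $\sigma = s_0$ and equals precisely
$$L_0^{*} = \frac{4\pi}{\sqrt{3(\lambda-1)\sqrt{\lambda^2-1}}},$$
which is the expression in \eqref{L0uniforme}.

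Now assume $L > L_0^{*}$. Then for any $\sigma \in [0,s_0]$ we have $L > L_0^{*} \geq L_0(\sigma)$, so Theorem \ref{thinestabilidad} applies and yields that $\Sigma(\sigma; L)$ is unstable. This is exactly the desired conclusion.

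There is no real obstacle here; the only minor point to verify is that the maximum of $L_0(\sigma)$ on $[0,s_0]$ is indeed finite and attained at the endpoint (rather than being an unbounded supremum on the open interval), which is immediate because $\lambda > 1$ implies $\lambda - 1 > 0$, so $L_0^{*}$ is a well-defined positive real number. The explicit test function used to derive Theorem \ref{thinestabilidad} already depends on $\sigma$, so no further construction is required: we only need uniformity of the threshold, which is what the monotonicity argument provides.
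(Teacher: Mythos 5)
Your proposal is correct and follows essentially the same route as the paper: the paper also observes that $L_0(\sigma)$ in \eqref{l1} is strictly increasing on $[0,s_0]$ (since $\cos(\sigma\sqrt{\lambda^2-1})$ decreases from $1$ to $-1$ as $\sigma\sqrt{\lambda^2-1}$ sweeps $[0,\pi]$), attains its maximum $L_0^{*}$ at $\sigma=s_0$, and then invokes Theorem \ref{thinestabilidad} for each fixed $\sigma$. Your explicit verification of the monotonicity and of the finiteness of $L_0^{*}$ simply fills in details the paper leaves implicit.
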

 
\subsection{Case $\lambda=1$}

 The base curve  of cylindrical translating $1$-solitons  is given in \eqref{fp2}. This curve is symmetric about the $x_3$-axis and has one self-intersection point.  See Figure \ref{fig2}, left. The notation that we use is  
\begin{equation*} 
\Sigma(s_0;L)=\{(-s+2\arctan(s),t,\log(1+s^2)):s\in[-s_0,s_0], t\in [0,L]\} 
\end{equation*}
 The curve $\alpha$ is a graph in the interval $(-1,1)$ of the domain of $\alpha$ and its projection on the $x_1$-axis is $(1-\frac{\pi}{2},-1+\frac{\pi}{2})$. Thus, for any $s_0<1$, $\Sigma(s_0;L)$ is stable by Theorem \ref{t1}. Consequently, we will investigate the instability of symmetric compact pieces of $\alpha(s)$ for $s\in[-s_0,s_0],\ s_0>1$. 

\begin{theorem}\label{t42} Suppose $\lambda=1$.  Let $\bar{s}_0\sim 1.0213$. If $s_0>\bar{s}_0$, then there is 
\begin{equation}\label{l3}
L_0=8\pi s_0^{5/2}\left(15(\frac{s_0^3}{3}+9s_0-\left(9+6s_0^2-3 s_0^4\right) \tan ^{-1}(s_0))\right)^{-1/2}.
\end{equation}
  such that if $L>L_0$, the surface $\Sigma(s_0;L)$ is unstable.
\end{theorem}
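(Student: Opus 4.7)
The plan is to adapt the variational argument from the $\lambda > 1$ case: exhibit a test function $u \in C_0^\infty(\Sigma(s_0; L))$ that vanishes on the boundary and has zero weighted mean, for which $Q_\phi(u) < 0$ whenever $L > L_0$. Following the same template, I take the separated form $u(s,t) = f(s) g(t)$ with $g(t) = \sin(2\pi t/L)$, so that $g$ vanishes at $t = 0, L$ and $\int_0^L g \, dt = 0$, automatically giving $\int u \, dA_\phi = 0$; the function $f$ must vanish at $s = \pm s_0$.

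For the curve in \eqref{fp2}, a direct computation gives $\alpha_1'(s) + 1 = 2/(1+s^2)$ and $e^{\alpha_3(s)} = 1 + s^2$, so formula \eqref{particular} reduces to
\begin{equation*}
Q_\phi(u) = \frac{L}{2}\int_{-s_0}^{s_0}\left[(f')^2 - \left(\frac{4}{(1+s^2)^2} - \frac{4\pi^2}{L^2}\right)f^2\right](1+s^2)\, ds.
\end{equation*}
The key choice, the polynomial analog of the trigonometric test function used for $\lambda > 1$, is
\begin{equation*}
f(s) = (s_0^2 - s^2)\,e^{-\alpha_3(s)/2} = \frac{s_0^2 - s^2}{\sqrt{1+s^2}}.
\end{equation*}
The factor $e^{-\alpha_3/2}$ is tailored so that $f^2 e^{\alpha_3} = (s_0^2 - s^2)^2$ is polynomial, producing the clean value $I_3 := \int_{-s_0}^{s_0} f^2 e^{\alpha_3} ds = 16 s_0^5/15$. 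For the other two integrals $I_1 = \int (f')^2 e^{\alpha_3} ds$ and $I_2 = \int (\alpha_1'+1)^2 f^2 e^{\alpha_3} ds$, I write $(s_0^2 - s^2)^2 = \bigl[(s_0^2 + 1) - (1+s^2)\bigr]^2$ and expand, so that everything reduces to the standard integrals $\int ds / (1+s^2)^k$ for $k = 0, 1, 2$; the $\arctan(s_0)$ contributions appear precisely through the factor $(\alpha_1'+1)^2 = 4/(1+s^2)^2$.

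After collecting, a bookkeeping calculation yields
\begin{equation*}
I_2 - I_1 = \frac{s_0^3}{3} + 9 s_0 - (9 + 6 s_0^2 - 3 s_0^4)\arctan(s_0),
\end{equation*}
and the inequality $Q_\phi(u) < 0$ becomes $L^2 > 4\pi^2 I_3 / (I_2 - I_1)$, which is exactly $L > L_0$ with $L_0$ as in \eqref{l3}, provided $I_2 - I_1 > 0$. The role of $\bar{s}_0 \sim 1.0213$ is precisely to guarantee this positivity: it is the unique positive root of $D(s_0) := s_0^3/3 + 9 s_0 - (9 + 6 s_0^2 - 3 s_0^4)\arctan(s_0) = 0$. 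Its approximate value is obtained numerically (one checks $D(1) = 28/3 - 3\pi < 0$ and $D(s_0) > 0$ for $s_0$ slightly past $1.0213$), and for $s_0 > \bar{s}_0$ the positivity of $D(s_0)$ is verified past the crossing.

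The step I expect to be the main obstacle is identifying the correct test function. The direct trigonometric analog $\cos(\pi s/(2 s_0)) e^{-\alpha_3/2}$ of the $\lambda > 1$ template does not yield elementary integrals against the weight $1/(1+s^2)^2$, and the naive polynomial $s_0^2 - s^2$ alone leaves a residual $1 + s^2$ in $I_3$ that forces $\arctan$ terms to appear in all three integrals, obscuring the final cancellation. The hybrid choice $(s_0^2 - s^2)/\sqrt{1+s^2}$ is essentially forced by the demand that $f^2 e^{\alpha_3}$ be polynomial, channeling all $\arctan$ contributions through a single term. Once this is in place, the remaining computation is elementary algebra and the threshold $\bar{s}_0$ is the only non-symbolic quantity.
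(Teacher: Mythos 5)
Your proposal is correct and takes essentially the same route as the paper's proof: the same separated test function $u(s,t)=f(s)g(t)$ with $g(t)=\sin(2\pi t/L)$ and $f(s)=\pm(s^2-s_0^2)e^{-\alpha_3(s)/2}$ (the sign is immaterial), the same reduction via \eqref{particular}, and the same threshold obtained by making the bracket in $Q_\phi(u)$ vanish. Your value $I_2-I_1=\frac{s_0^3}{3}+9s_0-(9+6s_0^2-3s_0^4)\arctan(s_0)$ agrees exactly with the paper's expression \eqref{q2} (the paper's listed intermediate integrals contain sign/factor typos, but its final $Q_\phi(u)$ matches your computation), and your $D(s_0)$ is precisely the negative of the paper's function $\varphi(s_0)$, with $\bar{s}_0\sim1.0213$ its unique positive zero.
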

\begin{proof}
 For the function $f=f(s)$ in the test function $u(s,t)=f(s)g(t)$, take 
$$f(s)=(s^2-s_0^2)e^{-\alpha_3(s)/2},$$
and $g$ is as in \eqref{gg}. The calculations of each one of the terms that appear in the expression \eqref{particular} of $Q_\phi$ are:

\begin{figure}[hbtp]
\centering
\includegraphics[width=.4\textwidth]{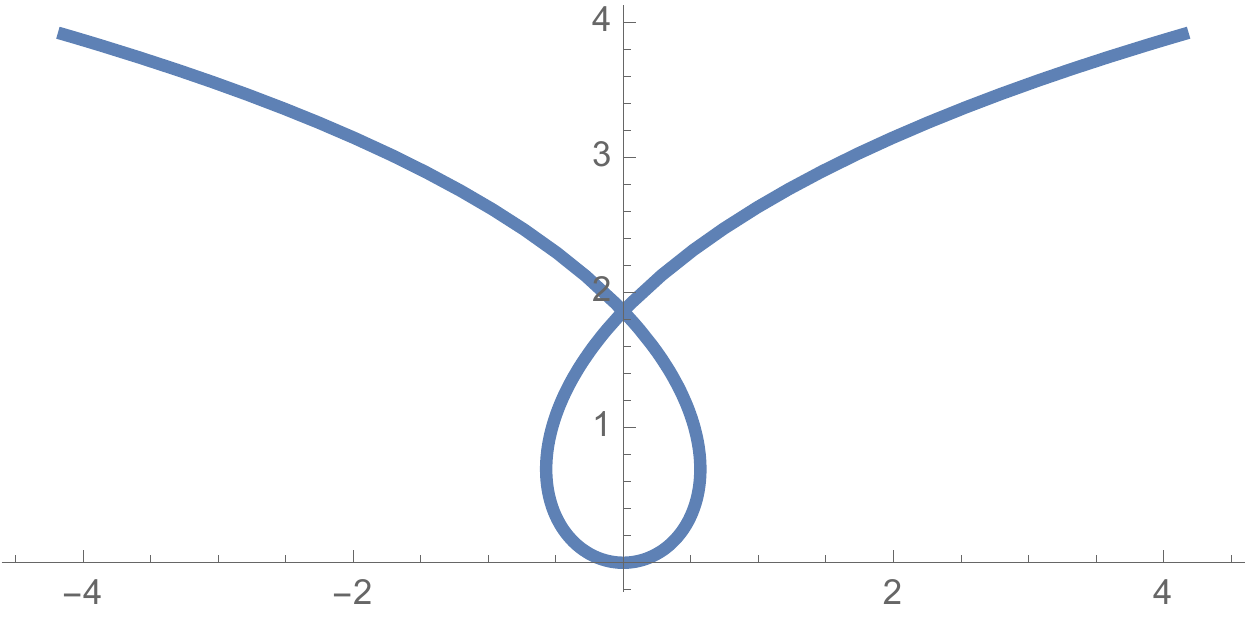} \qquad \includegraphics[width=.15\textwidth]{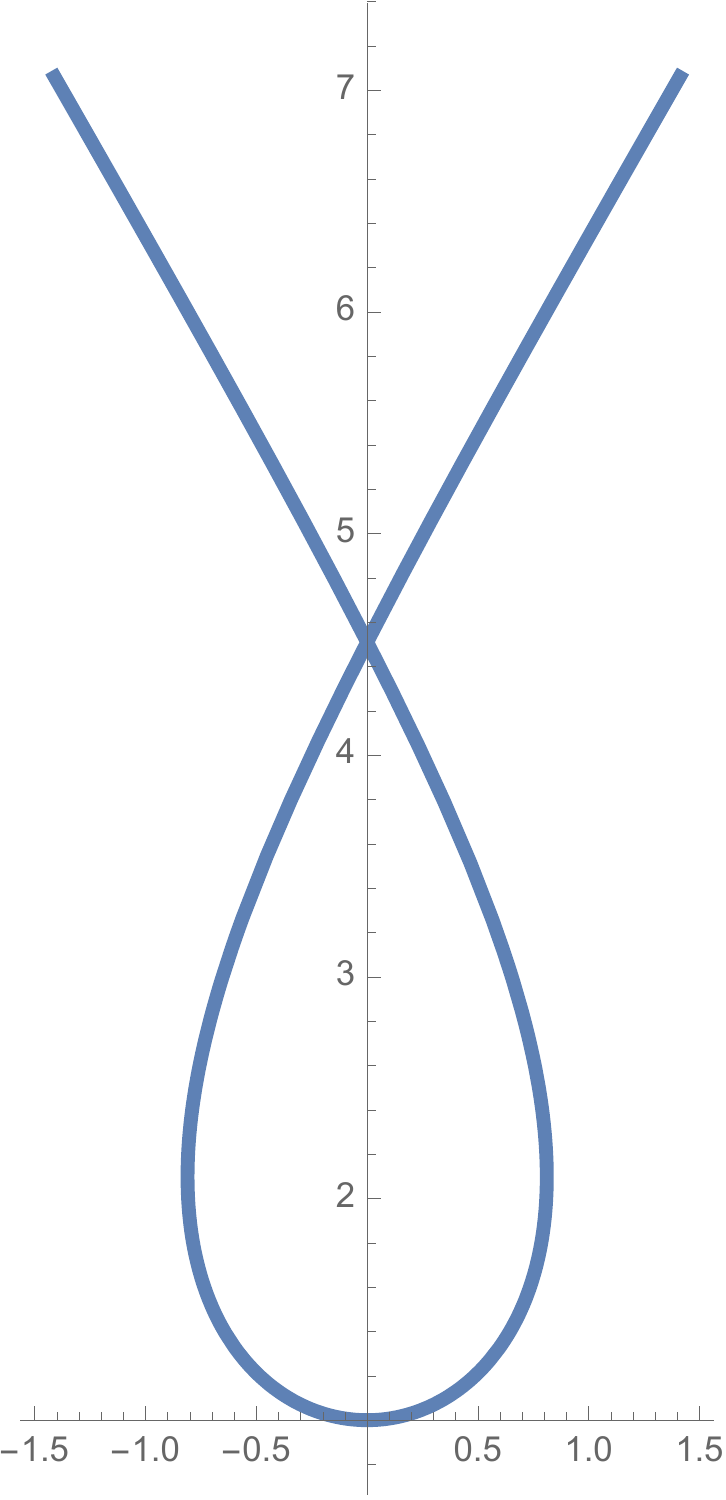} \caption{Case $\lambda=1$ (left) and $\lambda<1$ (right)} \label{fig2}
\end{figure}

\begin{align*}
\int_{-s_0}^{s_0} f'^2 e^{\alpha_3}\, ds&=\frac{11 s_0^3}{3}+3 s_0\left(s_0^4-2 s_0^2-3\right) \tan ^{-1}(s_0), \\
\int_{-s_0}^{s_0} (\alpha_1'+\lambda)^2f^2 e^{\alpha_3}\, ds&=-2 \left(s_0 \left(s_0^2+3\right)+\left(s_0^4-2 s_0^2-3\right) \tan ^{-1}(s_0)\right) \\
 \int_{-s_0}^{s_0} f^2 e^{\alpha_3}\, ds&= \frac{16  s_0^5}{15 }
\end{align*}
  Then
\begin{equation}\label{q2}
Q_\phi(u)=\frac{L}{2}\left( \frac{64 \pi ^2 s_0^5}{15 L^2}-\frac{s_0^3}{3}-9s_0+\left(9+6s_0^2-3 s_0^4\right) \tan ^{-1}(s_0)\right).
\end{equation}
  We study under what conditions the above parenthesis is negative. Let 
  $$h(L)=\frac{64 \pi ^2 s_0^5}{15 L^2}-\frac{s_0^3}{3}-9s_0+\left(9+6s_0^2-3 s_0^4\right) \tan ^{-1}(s_0).$$
  Notice that   $s_0$ is fixed. The function $h(L)$ is monotonically decreasing on $L$, with
  $$\lim_{L\to\infty}h(L)=-\frac{s_0^3}{3}-9s_0+\left(9+6s_0^2-3 s_0^4\right) \tan ^{-1}(s_0).$$ 
 We study when this limit is negative.  Define
  $$\varphi(s_0)=-\frac{s_0^3}{3}-9s_0+\left(9+6s_0^2-3 s_0^4\right) \tan ^{-1}(s_0).$$
  It is not difficult to see that this function $\varphi$ only vanishes at $\bar{s}_0\sim 1.0213$  and that for $s_0>\bar{s}_0$, the function $\varphi$ is negative. Thus, given $s_0>\bar{s}_0$, 
  $$\lim_{L\to\infty}h(L)=\varphi(s_0)<0.$$
Coming back to the expression of $Q_\phi(u)$ in \eqref{q2}, this implies that there is a unique $L_0>0$, which is given as the unique solution of the equation $h(L)=0$, such that if $L>L_0$, $Q_\phi(u)<0$, which proves the instability of the surface.
  \end{proof}
  
With a little more effort, if we see the value $L_0$ in \eqref{l3} as a function of $s_0$, it can be proved that $L_0$ is increasing on $s_0$. As a consequence, as the amplitude $s_0$ of $\alpha$ increases, longer and longer pieces of surfaces are required  to achieve the desired instability. 
\subsection{Case $\lambda<1$}
Suppose that $\lambda<1$.  From Proposition \ref{pr1} we study the instability of those surfaces that are not graphs on the $x_1$-axis. The curve  \eqref{fp3} is symmetric about the $x_3$-axis with a minimum at $s=0$ and with a unique self-intersection point. The curve $\alpha$ is a graph in $(-s_1,s_1)$, where 
\begin{equation}\label{ss1}
s_1=\frac{\cosh^{-1}(1/\lambda)}{\sqrt{1-\lambda^2}}.
\end{equation}

See Figure \ref{fig2}, right. We will study the stability of symmetric compact pieces of $\Sigma$, where $\alpha$ is defined in a symmetric interval $[-s_0,s_0]$. Again, let 
\begin{equation*} 
\Sigma(s_0;L)=\{(\alpha_1(s),t,\alpha_3(s)):s\in[-s_0,s_0], t\in [0,L]\}.
\end{equation*}
 We know that if   $s_0<s_1$, then $\Sigma(s_0;L)$ is stable by Theorem \ref{t1}. Therefore our interest is for those values of $s_0$ greater than $s_1$. 
 
For the test function, we take $g$ as in \eqref{gg} and 
$$f(s)=\cos\left(\frac{\pi s}{2s_0}\right)e^{-\alpha_3(s)/2}.$$
In contrast to the cases $\lambda>1$ and $\lambda=1$, an explicit integration of   \eqref{particular} is not possible.  For this reason, we will give a numerical study  for different values of $\lambda$ that shows that the Plateau-Rayleigh instability also holds for the case $\lambda<1$.  The numerical computations are performed using the software  {\tt Mathematica}.  As a sample of this study, we will consider the values   $\lambda=1/4$, $\lambda=1/2$ and $\lambda=3/4$.

The expression of $Q_\phi(u)$ is 
$$Q_\phi(u)=\frac{L}{2} I(u),$$
where 
$$I(u)=\int_{a}^{b}\left(f'^2-\left((\alpha_1'+\lambda)^2-\frac{4\pi^2}{L^2}\right)f^2\right)e^{\alpha_3}ds.$$
Once we have obtained the value $I(u)$,   we are looking for large values of $L$ such that $I(u)$ is negative. The routine with {\tt Mathematica} is the following. First, let us input the curve $\alpha$ given in \eqref{fp3}: 
 \begin{equation*}
\begin{split}
&\tt{  \alpha1[s\_]  := 2 \tan ^{-1}[\sqrt{\frac{\lambda +1}{1-\lambda }} \tanh[\frac{1}{2} \sqrt{1-\lambda ^2} s]]-\lambda  s;  }\\
&\tt{   \alpha3[s\_ ] := \log[\cosh [\sqrt{1-\lambda ^2} s]-\lambda]; }
\end{split}
\end{equation*} 
The test functions are 
  \begin{equation*}
\begin{split}
&\tt{ g[t\_]:=\sin [\frac{2 \pi  t}{L}];  }\\
& \tt{f[s\_]:=e^{-\frac{ \alpha3[s]}{2}} \cos[\frac{\pi  s}{2 \text{so}}];}
\end{split}
\end{equation*}

 The value $\lambda$ is fixed, for example, $\lambda=1/4$. The value $s_1$ is $s_1\sim2.1311$.  Next  we fix $s_0$ and obtain the value of $I(u)$ for different values of $L$. Here we use the   {\tt NIntegrate} command: 
\begin{equation*}
\begin{split}
&\tt{so=3;}\\
&\tt{\lambda =\frac{1}{4};}\\
&\tt{L=4;}\\
&\tt{  \text{NIntegrate}\left[e^{\alpha3[s]} \left(f'[s]^2+\frac{4 \pi ^2 f[s]^2}{L^2}+f[s]^2 \left(-\left(\lambda +\alpha1'[s]\right)^2\right)\right),\{s,-\text{so},\text{so}\}\right];}
\end{split}
\end{equation*}
In this case, we find
$$\tt{ Out[\%]= 7.1166}$$
In Tables \ref{table1}, \ref{table2} and \ref{table3},  we have computed the value of $I(u)$ for different values of $L$. As a first step, we need to know the value $s_1$ for which  $\alpha$ is not a graph 
 in a symmetric interval $[-s_0,s_0]$, with $s_0>1$. For the above choices of $\lambda$, the values of $s_1$ are  $2.1311$ ($\lambda=1/4$), $s_1=1.5206$ ($\lambda=1/2$) and   $ 1.2024$ ($\lambda=3/4$).  
 
 In the three tables, we have boxed the first value of $L$ such that $I(u)$ is negative. In the last choice of $s_0$, namely, $s_0=7$ ($\lambda=\frac14$), $s_0=10$ ($\lambda=\frac12$) and $s_0=10$ ($\lambda=\frac34$), the first value of $L$ for which $I(u)$ is negative does not appear in the table because it exceeds of the computed one: $L=40$ ($\lambda=\frac14$), $L=20$ ($\lambda=\frac12$) and $L=12$ ($\lambda=\frac34$).

\begin{table}[ht]
  \caption{Values of $I(u)$ for $\lambda=\frac14$.}\label{table1}

  \centering
  \begin{tabular}{ccccccc}
    \hline
 $L$   & $15$ &  $20$ & $25$ & $30$&$35$& $40$ \\ 
    \hline
    $s_0$ & & & & & &\\
 3 & 0.2405 & 0.0102 & \fbox{-0.0962} & -0.1541 & -0.1891 & -0.2117  \\ 
  4& 0.3229 & 0.0158 & \fbox{-0.1262} & -0.2034 & -0.2499 & -0.2802 \\
    5& 0.5434 & 0.1596 &\fbox{ -0.0180 }& -0.1145 & -0.1727 &  -0.2104\\
   6&  0.8320 & 0.3715 & 0.1583 & 0.0425 & \fbox{-0.0273 }& -0.0726\\
    7& 1.1585 & 0.6211 & 0.3724 & 0.2373 & 0.1558 & 0.1030  \\ 
  \hline
  \end{tabular}
  \end{table}

 \begin{table}[ht]
  \caption{Values of $I(u)$ for $\lambda=\frac12$.}\label{table2}

  \centering
  \begin{tabular}{ccccccc}
    \hline
 $L$   & $10$ &  $12$ & $14$ & $16$&$18$& $20$ \\ 
    \hline
    $s_0$ & & & & & &\\
 2 & 0.2486 & 0.0074 & \fbox{-0.1380} & -0.2324 & -0.2971 &  -0.3434  \\ 
  4& 0.3297 & \fbox{ -0.1527} & -0.4436 & -0.6324 & -0.7619 & -0.8545 \\
    6& 1.1578 & 0.4340 & \fbox{-0.0023} & -0.2856 & -0.4798 & -0.61872\\
   8&  2.1681 & 1.2030 & 0.6212 & 0.2435 & \fbox{-0.0153} & -0.2005\\
    10& 3.2460 & 2.0397 & 1.3124 & 0.8403 & 0.51667 & 0.2851  \\ 
  \hline
  \end{tabular}
  \end{table}

 \begin{table}[ht]
  \caption{Values of $I(u)$ for $\lambda=\frac34$.}\label{table3}

  \centering
  \begin{tabular}{ccccccc}
    \hline
 $L$   & $2$ &  $4$ & $6$ & $8$&$10$& $12$ \\ 
    \hline
    $s_0$ & & & & & &\\
 2 & 18.3781 & 3.5737& 0.832& \fbox{-0.1273}& -0.5715&-0.8127  \\ 
  4& 37.1420& 7.5332 & 2.0501 & 0.1310 & \fbox{-0.7572} & -1.2397 \\
    6& 56.7298 & 12.3166& 4.0918& 1.2132&\fbox{ -0.1191}& -0.8429\\
   8& 76.5192& 17.3016&6.3353& 2.4972& 0.7206&\fbox{ -0.2443}\\
    10& 96.3834& 22.3613& 8.6535& 3.8558& 1.6351& 0.4288  \\ 
  \hline
  \end{tabular}
  \end{table}

 The above results  give us convincing evidence of the existence of a critical value $L_0$ of the Plateau-Rayleigh instability criterion for the case $\lambda<1$. 
   
   \begin{theorem}[numerical] \label{t46}
Suppose $\lambda<1$. Let $s_1$ be the value given in \eqref{ss1}. Given $s_0>s_1$, then there is   $L_0>0$ depending on $s_0$ such that if $L>L_0$,  the translating $\lambda$-soliton $\Sigma(s_0;L)$ is unstable.
\end{theorem}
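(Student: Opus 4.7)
The plan is to imitate the test-function strategy used in Theorems \ref{thinestabilidad} and \ref{t42}. Take the separated test function $u(s,t)=f(s)g(t)$ with
$$g(t)=\sin\!\left(\tfrac{2\pi t}{L}\right),\qquad f(s)=\cos\!\left(\tfrac{\pi s}{2s_0}\right)e^{-\alpha_3(s)/2},$$
so that $f(\pm s_0)=0$, $g(0)=g(L)=0$ and $\int_0^L g\,dt=0$; the boundary condition and the zero weighted mean condition in \eqref{b1} then hold automatically. Plugging into the reduced quadratic form \eqref{particular} one obtains
$$Q_\phi(u)=\frac{L}{2}\Bigl(A(s_0)-B(s_0)+\frac{4\pi^2}{L^2}C(s_0)\Bigr),$$
where $A$, $B$, $C$ stand for the three weighted integrals of $f'^2$, $(\alpha_1'+\lambda)^2 f^2$ and $f^2$ against $e^{\alpha_3}$ on $[-s_0,s_0]$, respectively.

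With this reduction in hand, the argument parallels the one used for $\lambda=1$: provided that $A(s_0)<B(s_0)$, the parenthesis is monotonically decreasing in $L$ with limit $A-B<0$, so that there is a unique threshold
$$L_0(s_0)=2\pi\sqrt{\frac{C(s_0)}{B(s_0)-A(s_0)}}$$
above which $Q_\phi(u)<0$ and the instability of $\Sigma(s_0;L)$ is certified by the test function. Thus the whole theorem is reduced to checking the single sign statement $A(s_0)<B(s_0)$ for every $s_0>s_1$. Note that the role played by the hypothesis $s_0>s_1$ is the natural one: for $s_0\le s_1$ the curve $\alpha$ is still a graph over the $x_1$-axis, so $\Sigma(s_0;L)$ is strongly stable by Theorem \ref{t1} and no such threshold can exist.

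The main obstacle is that with the parametrization \eqref{fp3} the weighted integrals $A$, $B$, $C$ are not expressible in closed form; the factor $e^{-\alpha_3/2}$ built into $f$ tames the weight $e^{\alpha_3}$ only in $C$, while the presence of $(\alpha_1'+\lambda)^2 = (1-\lambda^2)^2/(\cosh(s\sqrt{1-\lambda^2})-\lambda)^2$ leaves transcendental integrands in $B$ and $A$. Consequently the inequality $A(s_0)<B(s_0)$ is verified numerically by fixing sample values $\lambda\in\{1/4,1/2,3/4\}$ and a discrete grid of $(s_0,L)$ pairs, evaluating $I(u)=\tfrac{2}{L}Q_\phi(u)$ with \texttt{Mathematica}'s \texttt{NIntegrate} routine; the boxed entries in Tables \ref{table1}, \ref{table2} and \ref{table3} exhibit, for each $s_0$ in the grid, the first value of $L$ at which $I(u)$ changes sign, providing direct numerical evidence for the existence of $L_0(s_0)$ and justifying the qualifier ``numerical'' in the statement. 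A fully rigorous counterpart would require an analytic lower bound for $B-A$ valid uniformly in $\lambda\in(0,1)$ and $s_0>s_1$, which in turn would likely demand replacing $f$ by a trial function better adapted to the one-dimensional stability operator obtained after separation of variables; such a bound does not seem to be within reach of the present techniques, which is the precise reason the result is formulated only in its numerical form.
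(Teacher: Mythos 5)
Your proposal is correct and follows essentially the same route as the paper: the identical test functions $g(t)=\sin(2\pi t/L)$ and $f(s)=\cos(\pi s/(2s_0))e^{-\alpha_3(s)/2}$, the same reduction of $Q_\phi(u)$ via \eqref{particular}, and the same numerical verification with {\tt NIntegrate} for $\lambda\in\{1/4,1/2,3/4\}$ reported in Tables \ref{table1}--\ref{table3}. Your explicit reformulation of the threshold as $L_0=2\pi\sqrt{C/(B-A)}$ with the sign condition $B>A$ is a tidy way of packaging what the paper does implicitly, but it is the same argument.
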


We finish with a remark about strong stability. The results obtained in this section refer to the notion of stability of translating $\lambda$-solitons, that is, assuming that the variations preserve the weighted volume. Similar results can be obtained if we investigate the problem of strong stability. In such a case, we are dropping the assumption $\int_\Sigma u\, dA_\phi=0$ in \eqref{b1}. For the choice of test functions by separation of variables,  this means that the conditions \eqref{m-integral} are not required. 

It is expectable that for strong stability the length of the surface reduces in comparison with the stable case.  Notice that for circular cylinders viewed as cmc surfaces, the critical value changes from $L_0=2\pi r$ to $L_1=\pi r$ for strong stability.

The computations are analogous with the only difference that now we choose $g(t)=\sin(\pi t/L)$, hence $g''=-\frac{\pi^2}{L^2}g$. This difference appears in the integral \eqref{particular0} for the expression $gg''$. Following the same steps, we have the following result.

\begin{corollary} 
\begin{enumerate}
\item Case $\lambda>1$. Let $L_0$ as in \eqref{l1}. Then, the translating $\lambda$-soliton $\Sigma (\sigma;L)$ is strongly unstable if $L>L_0/2$.
\item Case $\lambda=1$. Let be $\bar{s}_0\sim 1.0213$ and $L_0$ as in \eqref{l3}.  If $s_0>\bar{s}_0$, then the translating $1$-soliton  $ \Sigma(s_0;L)$ is strongly unstable if $L>L_0/2$.
\item Case $\lambda<1$. Let be $s_1$ and $L_0$ as in Theorem \ref{t46}.   If $s_0>s_1$, then the translating $\lambda$-soliton  $ \Sigma(s_0;L)$ is strongly unstable if $L>L_0/2$.
\end{enumerate}

\end{corollary}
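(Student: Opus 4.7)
The plan is to mirror the computation of Section \ref{sec4} with the only modification being the choice of the factor in the test function $g(t)$. Recall that in the stable case we imposed the zero weighted-mean constraint \eqref{m-integral} through the particular choice $g(t)=\sin(2\pi t/L)$, whose integral on $[0,L]$ vanishes. For strong stability we are allowed to drop the constraint $\int_\Sigma u\,dA_\phi=0$, so the test function $g$ is only required to satisfy the Dirichlet boundary condition $g(0)=g(L)=0$. The natural replacement is
$$
g(t)=\sin\!\left(\frac{\pi t}{L}\right),
$$
which has $g''=-\frac{\pi^2}{L^2}\,g$.

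First, I would repeat the separation-of-variables setup: take $u(s,t)=f(s)g(t)$ with exactly the same $f$ as in each of the three cases ($\lambda>1$, $\lambda=1$, $\lambda<1$), insert this into \eqref{particular0}, and integrate by parts in $t$. The only place where the change from $\sin(2\pi t/L)$ to $\sin(\pi t/L)$ is felt is in the coefficient coming from $g''$: the expression $4\pi^2/L^2$ appearing in \eqref{particular} is replaced by $\pi^2/L^2$. Since $\int_0^L g(t)^2\,dt=L/2$ in either case, the rest of the structure of the quadratic form is identical. Equivalently, the new $Q_\phi(u)$ is obtained from the old one by the formal substitution $L\mapsto 2L$.

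Next, in each of the three cases I would read off the strong-instability threshold directly from this substitution:
\begin{itemize}
\item[(i)] For $\lambda>1$, equation \eqref{q1} with the new $g$ becomes the same expression with $L$ replaced by $2L$; solving the resulting inequality for $Q_\phi(u)<0$ gives precisely $L>L_0/2$, with $L_0$ as in \eqref{l1}.
\item[(ii)] For $\lambda=1$, the same substitution in \eqref{q2} yields $Q_\phi(u)<0$ exactly when the analog of $h(L)$ becomes negative, which, by the same monotonicity argument as in the proof of Theorem \ref{t42}, happens for $L>L_0/2$ provided $s_0>\bar{s}_0\sim 1.0213$.
\item[(iii)] For $\lambda<1$, the numerical computation of Theorem \ref{t46} yields negativity of $I(u)$ for $L$ above some threshold; the same numerics with $\pi^2/L^2$ in place of $4\pi^2/L^2$ (equivalently, $L\mapsto 2L$) gives the threshold $L_0/2$.
\end{itemize}
Finally I would observe that $g$ has a definite sign on $(0,L)$, so the new $u$ no longer satisfies $\int_\Sigma u\,dA_\phi=0$, confirming that we really are in the strong-stability (unconstrained) setting rather than the volume-preserving one.

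There is no genuine obstacle here: the proof is a bookkeeping adjustment to the proofs of Theorems \ref{thinestabilidad}, \ref{t42} and \ref{t46}, exploiting the fact that all integrals in $s$ remain untouched and only the single coefficient depending on $g''$ changes. The mild subtlety is checking in case (ii) that the limiting value $\lim_{L\to\infty} h(L)$, and hence the critical amplitude $\bar{s}_0$, are unchanged by the substitution $L\mapsto 2L$ — which is clear since this limit does not involve $L$ at all. This verifies that the same $\bar{s}_0$ governs the strongly unstable regime and that the length threshold is exactly halved.
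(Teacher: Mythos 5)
Your proposal is correct and matches the paper's own argument: the paper likewise proves the corollary by replacing $g(t)=\sin(2\pi t/L)$ with $g(t)=\sin(\pi t/L)$ (so $g''=-\tfrac{\pi^2}{L^2}g$), noting that the only change in \eqref{particular0} is the coefficient coming from $gg''$, which amounts to the substitution $L\mapsto 2L$ and hence halves each threshold. Your additional check that the limit defining $\bar{s}_0$ is independent of $L$, and your remark that dropping the zero weighted-mean constraint is what legitimizes the sign-definite $g$, are exactly the (implicit) bookkeeping the paper relies on.
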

 
\section{The stability of circular cylinders}\label{sec5}

In this section, we investigate the Plateau-Rayleigh instability phenomenon for circular cylinders. We have seen in Example \ref{r2} that circular cylinders are the only non-planar surfaces that are both cmc surfaces and translating $\lambda$-solitons. For a circular cylinder,  the density vector $\v$ coincides with the direction of the rulings. Here, $\lambda=1/r$, where $r>0$ is the radius of the cylinder. Therefore, circular cylinders appear as a case of special interest. First, because we can compare the same surface with two different notions of stability, one as cmc surface and other as a translating $\lambda$-soliton. Furthermore, in the latter case, the surface does not appear in Proposition \ref{pr1} because the density vector $\v$ is not orthogonal to the rulings.  A third interesting aspect is that the base curve, a circle, is closed, which it does not appeared in the discussion of Section \ref{sec4}. 

In this section, we will address the instability of compact pieces of a circular cylinder where the base curve is a full circle. Without loss of generality, we assume $\v=\a=(0,0,1)$. Let us parametrize the base curve  as  $\alpha(s)=r(\cos(s/r), \sin(s/r),0)$. Then, the circular cylinder $C_r$, whose base curve is $\alpha$, is parametrized by 
 $\Psi(s,t)=\alpha(s)+t\, e_3$. Let $C_r(L)$  denote the compact piece of $C_r$ of length $L$, 
 $$C_r(L)=\Psi(\R\times[0,L]).$$
 Consider a test function $u$ of type  $u(s,t)=f(s)g(t)$. Since now $\alpha$ is a closed curve, the function $f(s)$  must obey the boundary conditions 
 \begin{equation}\label{f3}
 f(0)=f(2\pi r)\quad\mbox{and}\quad f'(0)=f'(2\pi r).
 \end{equation}

Because we are interested in comparing the different notions of stability for the same surface, we briefly recall   the Plateau-Rayleigh instability criterion of a circular cylinder as a cmc surface.  In the case of constant mean curvature, the argument is as in Section \ref{sec4}   but now the quadratic form is 
$$
Q(u)=-\int_0^{2\pi r}  \int_0^Lfg\left(f''g+fg''+\frac{1}{r^2}fg\right)\, dsdt.
$$
 Taking $f(s)=1$ and with the same function $g(t)=\sin(2\pi t/L)$,   we obtain 
$$Q(u)=-\pi r L\left(\frac{1}{r^2}-\frac{4\pi^2}{L^2}\right).$$
If we take $L_0=2\pi r$, then $C_r(L)$ is unstable for all $L>L_0$.

We now view   a circular cylinder $C_r$ as a translating $\lambda$-soliton. The quadratic form $Q_\phi$ is different from the expression \eqref{particular} because  the weighted area element is 
$$dA_\phi=e^{\langle\Psi,e_3\rangle}\, d\Sigma=e^t\, ds\, dt$$
and $\langle\nabla u,\v\rangle=fg'$.  Then, integrating by parts we have
\begin{equation*}
\begin{split}
Q_\phi(u)&=-\int_0^{2\pi r} \int_0^L fg\left(f''g+fg''+fg'+\frac{1}{r^2} fg\right)e^t\, ds\, dt\\
&=\int_0^{2\pi r}f'^2\, ds\int_0^L g^2e^t\, dt-\int_0^{2\pi r}f^2\, ds\int_0^Lg(g''+g'+\frac{1}{r^2}g)e^t\, dt.
\end{split}
\end{equation*}
Now, the boundary conditions are \eqref{f3} but the zero weighted mean condition \eqref{bb2}  must be modified to
\begin{equation}\label{bb3}
0= \int_a^b  f(s)\, ds \int_ 0^Lg(t)e^{ t}\, dt.
\end{equation}
This time, we choose as test functions $f(s)=1$ and
\begin{equation}\label{g3}
g(t)=\sin\left(\frac{2\pi t}{L}\right)e^{-t}.
\end{equation}

 \begin{theorem}\label{tc} Let $r<\sqrt{2}$. Consider a circular cylinder of radius $r>0$ and define
 $$L_0=\frac{\sqrt{8}\pi r}{\sqrt{2-r^2}}.$$
 Then, $C_r(L)$ is unstable as a translating $\frac{1}{r}$-soliton if $L>L_0$.
 \end{theorem}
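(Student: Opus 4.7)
The plan is to exhibit a single test function of the form $u(s,t)=f(s)g(t)$ satisfying the boundary conditions \eqref{f3}, the zero weighted mean condition \eqref{bb3}, and for which the quadratic form just derived is negative whenever $L>L_0$. Since the base circle is closed, the simplest admissible choice is $f(s)\equiv 1$, which trivially satisfies \eqref{f3} and, crucially, kills the first integral in the displayed expression for $Q_\phi$ (because $f'\equiv 0$). For the vertical factor I would take $g(t)=\sin(2\pi t/L)\, e^{-t}$, exactly as suggested by \eqref{g3}; the extra factor $e^{-t}$ is the key device, inserted precisely so that $g(t)e^t=\sin(2\pi t/L)$ turns the constraint \eqref{bb3} into $\int_0^L\sin(2\pi t/L)\,dt=0$, which holds by symmetry, and also so that $g\,e^t$ becomes a plain trigonometric expression in the quadratic form.

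With these choices, $Q_\phi(u)=-2\pi r\int_0^L g\bigl(g''+g'+r^{-2}g\bigr)e^t\,dt$. Direct differentiation shows that the combination $g''+g'$ simplifies: the terms $\pm\sin(2\pi t/L)e^{-t}$ arising from differentiating $e^{-t}$ twice cancel, and one is left with
\begin{equation*}
g''+g'=-\frac{4\pi^2}{L^2}\sin(2\pi t/L)e^{-t}-\frac{2\pi}{L}\cos(2\pi t/L)e^{-t}.
\end{equation*}
Multiplying $g''+g'+r^{-2}g$ by $g\,e^t$ collapses the integrand to
\begin{equation*}
\Bigl[\bigl(\tfrac{1}{r^2}-\tfrac{4\pi^2}{L^2}\bigr)\sin^2(2\pi t/L)-\tfrac{2\pi}{L}\sin(2\pi t/L)\cos(2\pi t/L)\Bigr]e^{-t}.
\end{equation*}

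Next I would evaluate the remaining integrals in closed form. Via the double-angle identities they reduce to $\int_0^L e^{-t}\,dt$ and $\int_0^L e^{(\pm 4\pi i/L-1)t}\,dt$. The latter are handled uniformly by $\int_0^L e^{(i\Omega-1)t}\,dt=(1-e^{-L})/(1-i\Omega)$ with $\Omega=4\pi/L$, using the fact that $e^{i\Omega L}=1$. Splitting into real and imaginary parts, collecting, and simplifying produces a factored expression of the form
\begin{equation*}
Q_\phi(u)=-\frac{8\pi^{3}r\,(1-e^{-L})}{L^{2}+16\pi^{2}}\left(\frac{2}{r^{2}}-\frac{8\pi^{2}}{L^{2}}-1\right).
\end{equation*}
The prefactor is strictly positive, so $Q_\phi(u)<0$ is equivalent to $2/r^{2}-8\pi^{2}/L^{2}-1>0$. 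Solving for $L$ yields $L^{2}(2-r^{2})>8\pi^{2}r^{2}$, which is meaningful precisely when $r<\sqrt{2}$ and is then equivalent to $L>\sqrt{8}\,\pi r/\sqrt{2-r^{2}}=L_0$, giving the claimed instability.

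The main obstacle is purely computational, namely, the clean evaluation of the three non-elementary-looking integrals and verifying that all terms combine into the compact inequality above. The conceptual subtlety is the choice of the weight $e^{-t}$ inside $g$: without it, neither the constraint \eqref{bb3} would be handled so cheaply nor would the integrand collapse to elementary trigonometric exponentials, and one would be unable to read off a clean threshold.
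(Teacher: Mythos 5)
Your proposal is correct and follows essentially the same route as the paper: the same test functions $f(s)\equiv 1$ and $g(t)=\sin(2\pi t/L)e^{-t}$, the same explicit integration of $Q_\phi(u)$, and the same sign analysis yielding the threshold $L_0=\sqrt{8}\pi r/\sqrt{2-r^2}$. The only discrepancy is in the positive prefactor (the paper's displayed formula carries an extra factor $e^{-L}$ that your computation, which checks out, does not produce), but since both prefactors are strictly positive this has no bearing on the instability conclusion.
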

 
 \begin{proof} With the choice of the above  test function, explicit integration of  $Q_\phi(u)$ yields
$$
Q_\phi(u)=\frac{8\pi^3e^{-L}(1-e^{-L})}{rL^2(L^2+16\pi^2)}\left(8\pi^2r^2+L^2(r^2-2)\right).
$$
Then, the result is immediate.
\end{proof}

We finish this section with further discussions on the instability of circular cylinders and the value $r=\sqrt{2}$ that appears in Theorem \ref{tc}. In \cite[Th. 4.8]{lo1}, the second author classified all the rotational translating $\lambda$-solitons by means of the qualitative study of the solutions of a nonlinear autonomous system. In particular, vertical circular cylinders were proved to be exactly the equilibrium points of this differential system. Furthermore,  
\begin{enumerate}
\item If $\lambda>\frac12$, then any rotational translating $\lambda$-soliton close enough to the cylinder converges asymptotically to it,  wiggling  around infinitely-many times and in particular never being a graph outside a compact set.
\item If $\lambda\leq\frac12$, then any rotational translating $\lambda$-soliton close enough to the cylinder is a graph outside a compact set and converges asymptotically to it.
\end{enumerate}
Therefore, if $\lambda\leq\frac12$, any infinitesimal smooth deformation of the cylinder is close enough to a graphical translating $\lambda$-soliton, hence stable. On the contrary, if $\lambda>\frac12$ then the infinitesimal deformation of the cylinder is close enough to a non-graphical translating $\lambda$-soliton, hence the desired instability may occur. In Theorem \ref{tc},  we achieved the  instability for all values $\lambda>1/\sqrt{2}$, which are in particular greater than $\frac12$. Bearing these discussions in mind, it is natural to think that the Plateau-Rayleigh instability phenomenon holds when $\lambda>1/2$.  

The authors have also considered another different test functions. For example, instead $f(s)=1$ and $g(t)$ as in \eqref{g3}, we have the  following values of $Q(u)$ for the  next two test functions:
\begin{align*}
&f(s)=\sin\left(\frac{s}{r}\right), & &g(t)= \sin\left(\frac{\pi t}{L}\right),& &Q(u)=\frac{r\pi^3(e^{L}-1)(L^2+2\pi^2)}{L^4+4\pi^2 L^2}.\\
&f(s)=\sin\left(\frac{s}{r}\right), & &g(t)= \sin\left(\frac{\pi t}{L}\right)e^{-t},& &Q(u)=\frac{r\pi^3 (1-e^{-L})(L^2+2\pi^2)}{L^4+4\pi^2 L^2}.
\end{align*}
Now, condition \eqref{bb3} holds thanks to the function $f(s)$. Notice that   the value of $Q_\phi(u)$ is positive in both cases {\it for any $L$}.

\section*{Acknowledgements}  

Antonio Bueno has been partially supported by the Project P18-FR-4049. 

Rafael L\'opez is a member of the Institute of Mathematics  of the University of Granada and he   has been partially supported by  the Projects  PID2020-117868GB-I00 and MCIN/AEI/10.13039/501100011033.


\def\refname{References}

\end{document}